\definecolor{forestgreen}{rgb}{0.13, 0.55, 0.13}
\definecolor{lightblue}{rgb}{0.68, 0.85, 0.9}
\def\d{{\rm d}}
\def\N{{\mathbb N}}
\definecolor{mygray}{rgb}{0.9,0.9,0.9}
\begin{document}
\theoremstyle{plain}% default
\newtheorem{theorem}{Theorem}[section]
\newtheorem{lemma}{Lemma}[section]
\newtheorem{proposition}{Proposition}[section]
\newtheorem{corollary}{Corollary}[section]

\theoremstyle{definition}
\newtheorem{definition}[corollary]{Definition}

\newtheorem{example}{Example}[section]

\newtheorem{remark}{Remark}[section]
\newtheorem{remarks}[remark]{Remarks}
\newtheorem{note}{Note}
\newtheorem{case}{Case}

\numberwithin{equation}{section}
\numberwithin{table}{section}
\numberwithin{figure}{section}

\title[Weighted and shifted BDF2 methods on variable grids ]
{Weighted and shifted BDF2 methods on variable grids}

\author[Minghua Chen]{Minghua Chen}
\address{School of Mathematics and Statistics, Gansu Key Laboratory of Applied Mathematics and Complex Systems,
 Lanzhou University, Lanzhou 730000, P.R. China}
\email {\href{mailto:chenmh@lzu.edu.cn}{chenmh{\it @\,}lzu.edu.cn}}

%\author[Hong-lin Liao]{Hong-lin Liao}
%\address{Department of Mathematics, Nanjing University of Aeronautics and Astronautics, Nanjing 211106, P. R. China }
%\email {\href{mailto:liaohl@nuaa.edu.cn}{liaohl{\it @\,}nuaa.edu.cn}}
%

\author[Fan Yu]{Fan Yu}
\address{School of Mathematics and Statistics, Gansu Key Laboratory of Applied Mathematics and Complex Systems,
 Lanzhou University, Lanzhou 730000, P.R. China}
\email {\href{mailto:yuf20@lzu.edu.cn}{yuf20{\it @\,}lzu.edu.cn}}

\author[Qingdong Zhang]{Qingdong Zhang}
\address{School of Mathematics and Statistics, Gansu Key Laboratory of Applied Mathematics and Complex Systems,
 Lanzhou University, Lanzhou 730000, P.R. China}
\email {\href{mailto:zhangqd19@lzu.edu.cn}{zhangqd19{\it @\,}lzu.edu.cn}}

%\author[Zhimin Zhang]{Zhimin Zhang}
%\address{Beijing Computational Science Research Center, Beijing 100193, P. R. China; Department of Mathematics, Wayne State University, Detroit, MI 48202, USA}
%\email {\href{mailto:zmzhang@csrc.ac.cn}{zmzhang{\it @\,}csrc.ac.cn}}

%\thanks{The research was partially supported by NSFC 11601206, 12071216 and 11871092.}

%\date{\today}

\keywords{Weighted and shifted BDF2, variable step size,  stability and convergence of numerical methods}
\subjclass[2010]{Primary  65L06; Secondary 65M12.}

\begin{abstract}
Variable steps implicit-explicit multistep methods  for  PDEs have been presented in \cite{WR:08}, where  the zero-stability  is studied for ODEs;
however, the stability analysis  still remains an open question for PDEs.
Based on the idea of linear multistep methods, we   present a simple weighted and shifted BDF2  methods with variable steps for the parabolic problems,
which serve as a bridge between BDF2 and Crank-Nicolson scheme.
The contributions of this paper are as follows:
we first prove that the optimal  adjacent  time-step  ratios  for the weighted and shifted BDF2, which greatly improve the maximum time-step ratios for BDF2 in \cite{LZ,TZZ:20}.
Moreover, the unconditional stability and optimal  convergence are rigorous proved, which make up for the vacancy of the  theory for PDEs in \cite{WR:08}. Finally, numerical experiments are given to illustrate theoretical results.

%In the sense of stability,  the adjacent  time-step maximum ratios
%have been greatly improved to $r_k=\tau_k/{\tau_{k-1}}\approx 3.5615$
%for  two-step backward differentiation formula (BDF2) in
%[Liao and Zhang,  Math. Comp. \textbf{90} (2021) 1207--1226].
%In this work, we present the weighted and shifted BDF2  methods on variable grids, which serve as a bridge between BDF2 and Crank-Nicolson scheme.
%The contributions of this paper are as follows:
%we first prove that the adjacent  time-step (optimal) ratios $r_k=\tau_k/\tau_{k-1}\leq 4.8645$ for variable size BDF2.
%We further propose the ratios $r_k=\infty$ (without restriction) for Crank-Nicolson scheme.
%In particular, the ratios  $r_k$ of weighted and shifted BDF2 between them.
%Moreover, we extend the stability and convergence theoretical    to graded mesh for weighted and shifted BDF2, which
%error estimates are derived with nonsmooth data. Several numerical examples are implemented to validate the theoretical results.
\end{abstract}

%***************%%%%%%%%%%%%%%%%%%%%%%%%%%%%%%%%%%%%%
%%%%%%%%%%%%%%%%%%%%%%%%%%%%%%%%%%%%%%%%%%%%%%%%%%%%%
\maketitle

%%%%%%%%%%%%%%%%%%%%%%%%%%%%%%%%%%%%%%%%%%

\section{Introduction}\label{Se:intro}
Let $T >0, u^0\in H,$ and consider the initial value
problem of seeking $u \in C((0,T];D(A))\cap C([0,T];H)$ satisfying
\begin{equation}\label{ivp}
\left \{
\begin{aligned}
&u' (t) + Au(t) = f(t), \quad 0<t<T,\\
& u(0)=u^0,
\end{aligned}
\right .
\end{equation}
with  $A$ a positive definite, selfadjoint, linear operator on a
Hilbert space $(H, (\cdot , \cdot )) $ with domain  $D(A)$
dense in $H$ and $f  : [0,T] \to H$ a given forcing term.

%We consider the weighted-shifted BDF2 (WSBDF2) method.
Let $N\in \N$ and choose the nonuniform time levels $0=t_0<t_1<\cdots<t_N=T$ with the time-step $\tau_k=t_k-t_{k-1}$ for $1\leq k\leq N$. For any time sequence $\{v^n\}_{n=0}^N$, denote
\begin{equation*}
\nabla_{\tau}v^{n}:=v^n-v^{n-1},\quad\partial_{\tau}v^n:=\nabla_{\tau}v^{n}/\tau_n,\quad v^{n-\frac{1}{2}}:=\frac{1}{2}\left(v^n+v^{n-1}\right).
\end{equation*}
For $k=1,2,$ let $\Pi_{n,k}v$ denote the interpolating polynomial of a function $v$ over $k+1$ nodes $t_{n-k},\cdots,t_{n-1}$ and $t_n$. Taking $v^n=v(t_n)$, by using the Lagrange interpolation, the BDF1 formula is defined by
$D_1v^n:=(\Pi_{n,1}v)'(t)=\nabla_{\tau}v^{n}/\tau_n$ for $n\geq 1$, and the BDF2 formula is defined by
\begin{equation*}
\widetilde{D}_2v^n=\left(\Pi_{n,2}v\right)'(t_n)=\frac{1+2r_n}{\tau_n(1+r_n)}\nabla_{\tau}v^{n}-\frac{r_n^2}{\tau_n(1+r_n)}\nabla_{\tau}v^{n-1}, ~~\forall ~n\geq2,
\end{equation*}
where the adjacent time step ratios
\begin{equation*}
r_k:=\frac{\tau_k}{\tau_{k-1}},~~\forall~ 2\leq k\leq N.
\end{equation*}
Similarly, we construct the shifted BDF2 formula
\begin{equation*}
\bar{D}_2v^n=\left(\Pi_{n,2}v\right)'(t_{n-1})=\frac{1}{\tau_n(1+r_n)}\nabla_{\tau}v^{n}+\frac{r_n^2}{\tau_n(1+r_n)}\nabla_{\tau}v^{n-1}, ~~\forall ~n\geq2.
\end{equation*}
Hence, the weighted and shifted BDF2 (WSBDF2) operator is defined by
$$D_2v^n:=\theta \widetilde{D}_2v^n+(1-\theta)\bar{D}_2v^n,$$
i.e.,
\begin{equation}\label{1.2}
D_2v^n=\frac{1+2\theta r_n}{\tau_n(1+r_n)}\nabla_{\tau}v^{n}+\frac{(1-2\theta) r_n^2}{\tau_n(1+r_n)}\nabla_{\tau}v^{n-1},~~\forall \theta\in \left[1/2,1\right].
\end{equation}
Note that  $\theta\in \left[1/2,1\right]$ would  be a suitable choice, since  the scheme  is not  $A$-stable if $\theta<1/2$ and
the maximum ratios is too narrow if $\theta>1$, see remark \ref{remark1.1} and remark \ref{remark2.1}, respectively.
We remark that the coefficients of  WSBDF2 operator \eqref{1.2} share similarities with the implicit-explicit multistep methods in  \cite{WR:08}.
However, the techniques to acquire the coefficients still have some significant differences  between these two schemes.
For example,   the implicit-explicit multistep methods are obtained by the order conditions, but  WSBDF2 methods are constructed  by the weighted and shifted technique, which is
more simple and efficient for designing  the high-order schemes.

For concreteness, we use the BDF1 scheme, by defining $D_2v^1:=D_1v^1$ to compute the first-level
solution $u^1$ because the two-step WSBDF2 method needs two starting values. We recursively define a sequence of approximations $u^n$ to
the nodal values $u(t^n)$ of the exact solution by the WSBDF2 method,
\begin{equation}\label{1.3}
D_2u^n-\theta\Delta u^n-(1-\theta)\Delta u^{n-1}=\theta f^n+(1-\theta)f^{n-1},
\end{equation}
where the initial data $u^0=u_0$ and the exterior force $f^n=f\left(t_n\right)$. The weak form of the
time-discrete problem \eqref{1.3} for $k\geq1$ reads
\begin{equation}\label{1.7}
(D_2u^k,\phi^k)-(\theta\Delta u^k+(1-\theta)\Delta u^{k-1},\phi^k)=(\theta f^k+(1-\theta)f^{k-1},\phi^k)~~~~\forall\phi^k\in H_0^1(\Omega),
\end{equation}
where $(\cdot,\cdot)$ denotes the usual inner product in the space $L^2(\Omega)$. Correspondingly, $\|\cdot\|$ denotes the associated $L^2$ norm. There exists a positive constant $C_\Omega$
dependent on the domain $\Omega$ such that $\|\phi^n\|\leq C_\Omega\|\nabla\phi^n\|$ for any $\phi^n\in L^2(\Omega)\cap H_0^1(\Omega)$.
In particular, \eqref{1.3} reduces to the BDF2 method (multistep method) when $\theta=1$
\begin{equation*}
\widetilde{D}_2u^n-\Delta u^n=f^n,
\end{equation*}
and the Crank-Nicolson  method (one-step method) when $\theta=1/2$
\begin{equation*}
\frac{u^{n}-u^{n-1}}{\tau_n}-\Delta u^{n-\frac{1}{2}}=f^{n-\frac{1}{2}},
\end{equation*}
where $f^{n-\frac{1}{2}}=f(t_{n-\frac{1}{2}})$ or $f^{n-\frac{1}{2}}=\frac{1}{2}\left(f(t_{n-1})+f(t_n)\right)$ are permissible, as one sees using the Peano kernel for the midpoint or trapezoidal rule.

\begin{remark}\label{remark1.1}
The initial value problem  of \eqref{ivp} takes the form $u' (t)=f(u,t)$,
then \eqref{1.3} reduced  to the following simple form
\begin{gather*}
\left( \theta+\frac{1}{2} \right)u^{n+2}-2\theta u^{n+1}+\left(\theta-\frac{1}{2} \right)u^{n}= \tau \left( \theta f^{n+2}+  \left(1-\theta\right) f^{n+1}\right), \tag{$*$}
\end{gather*}
where $\tau$ is the  uniform time stepsize.

In fact, the above scheme ($*$) is a special type of two-step linear multistep methods, which  has the   characteristic polynomials
\begin{equation*}
\rho(\xi) = \left( \theta+\frac{1}{2} \right) \xi^{2}-2\theta \xi+  \left(\theta-\frac{1}{2} \right)~~{\rm and}~~\sigma(\xi) = \theta \xi^{2}+  \left(1-\theta\right)\xi.
\end{equation*}
From Lemma 2.1 in \cite{Dahl}, we know that the two-step method is $A$-stable, which required the roots of the second  characteristic polynomial $\sigma(\xi)$
lie on or within the unit circle. Namely,
\begin{equation*}
\xi =\left|\frac{1-\theta }{\theta}\right|\leq 1~~{\rm if ~ and ~only ~if~} \theta\geq \frac{1}{2}.
\end{equation*}

\end{remark}

The WSBDF2 operator \eqref{1.2} is regarded as a discrete convolution summation
\begin{equation}\label{1.4}
D_2v^n=\sum_{k=1}^{n}b_{n-k}^{(n)}\nabla_\tau v^k,~~\forall n\geq1,
\end{equation}
where the discrete convolution kernels $b_{n-k}^{(n)}$ are defined by $b_0^{(1)}=1/\tau_1$, and when $n\geq2$,
\begin{equation}\label{1.5}
b_{0}^{(n)}:=\frac{1+2\theta r_n}{\tau_n(1+r_n)},~~~b_{1}^{(n)}:=\frac{(1-2\theta) r_n^2}{\tau_n(1+r_n)} ~~~~{\rm and}~~~~ b_{j}^{(n)}:=0 ~~~~{\rm for}~~~~2\leq j\leq n-1.
\end{equation}

Following the approach of \cite{LZ}, the discrete orthogonal convolution (DOC) kernels $\{d_{n-k}^{(n)}\}_{k=1}^n$ is defined by
\begin{equation}\label{1.6}
d_{0}^{(n)}:=\frac{1}{b_{0}^{(n)}}\quad{\rm and}\quad d_{n-k}^{(n)}:=-\frac{1}{b_0^{(k)}}\sum_{j=k+1}^nd_{n-j}^{(n)}b_{j-k}^{(j)}=-\frac{b_{1}^{(k+1)}}{b_{0}^{(k)}}d_{n-k-1}^{(n)} ~~~~{\rm for}~~~~1\leq k\leq n-1.
\end{equation}
Obviously, the DOC kernels $\{d_{n-k}^{(n)}\}_{k=1}^n$ satisfies the discrete orthogonal identity
\begin{equation}\label{1.8}
\sum_{j=k}^n d_{n-j}^{(n)}b_{j-k}^{(n)}\equiv\delta_{nk}  ~~~~{\rm for}~~~~1\leq k\leq n.
\end{equation}
It is to note that the positive semi-definiteness of WSBDF2 convolution kernels $b_{n-k}^{(n)}$ and the corresponding DOC kernels $d_{n-k}^{(n)}$ plays an important role in our numerical analysis.

For convenience, we introduce the following matrices:
\begin{equation}\label{1.9}
B:=\begin{pmatrix}
b_0^{(1)}&&&\\b_1^{(2)}&b_0^{(2)}&&\\&\ddots&\ddots&\\&&b_1^{(n)}&b_0^{(n)}
\end{pmatrix}\quad{\rm and}\quad
D:=\begin{pmatrix}
d_0^{(1)}&&&\\d_1^{(2)}&d_0^{(2)}&&\\ \vdots&\vdots&\ddots&\\d_{n-1}^{(n)}&d_{n-2}^{(n)}&\cdots&d_0^{(n)}
\end{pmatrix},
\end{equation}
where the discrete convolution kernels $b_{n-k}^{(n)}$ and the DOC kernels $\{d_{n-k}^{(n)}\}_{k=1}^n$ are defined in \eqref{1.5} and \eqref{1.6}, respectively. It follows from the
discrete orthogonal identity \eqref{1.8} that:
\begin{equation}\label{1.10}
D=B^{-1}.
\end{equation}

Variable steps implicit-explicit multistep methods for PDEs have been presented in \cite{ARW,WR:08}, where  the zero-stability is studied for ODEs;
however, the stability and convergence of multi-step time-stepping schemes with variable steps would
be challenging difficult for PDEs, which is the motivation for us to consider this paper. Nowadays, there are many researchers study on the numerical analysis with variable steps for parabolic problems.
Variable steps BDF2 method for ODEs, Grigorieff proved that it is zero-stable if the adjacent time-step ratios
$r_k<1+\sqrt{2}$ in \cite{G}, also see \cite{CL}. In \cite{Be}, Becker applied the variable-step BDF2 formula to the parabolic equation and
established a second-order temporal convergence if $r_k\leq(2+\sqrt{13})/3\approx1.868$. However, the
resulting error estimate is far from sharp because it involves an undesired prefactor $\exp(C\Gamma_n)$, where $\Gamma_n$
may be unbounded as the time-step sizes vanish \cite {T}. Emmrich \cite{E} improved $r_k\leq1.91$ but still retained the undesirable prefactor $\exp(C\Gamma_n)$. Furthermore, \cite{CWYZ} replaced  $\exp(C\Gamma_n)$ by a bounded exponential prefactor $\exp(Ct_n)$ if $r_k\leq1.53$.
Recently, an adaptive BDF2 scheme for linear diffusion equation
is considered under $r_k\leq(3+\sqrt{17})/2\approx3.561$ in \cite{LZ}
and extended to $r_k\approx4.8645$ in \cite{LJWZ,TZZ:20}.
Using Crank-Nicolson reconstructions technique, a posteriori error estimate for parabolic equations with variable steps has been obtained \cite{AMN}.
In this work,  we   present the simple WSBDF2 methods with variable steps for the parabolic problems,
which serve as a bridge between BDF2 and Crank-Nicolson scheme.
We  prove that the optimal  adjacent  time-step  ratios  for the WSBDF2 schemes, which greatly improve the maximum time-step ratios for BDF2 in \cite{LZ,LJWZ,TZZ:20}.
Based on DOC technique \cite{LZ}, the unconditional stability and optimal convergence are rigorous proved, which fill in the gap  of the   theory for PDEs in \cite{WR:08}.

An outline of the paper is as follows. In the next section, the upper bound for the optimal  adjacent time-step ratios $r_k$ is proved so that the discrete convolution kernels $b_{n-k}^{(n)}$ are positive semi-definite, which will play an core role in the stability and convergence analysis.
In Section \ref{Se:stab}, the energy stability and unconditional stability are proved for WSBDF2  schemes under certain restrictions on the adjacent time-step ratios.
The  optimal convergence is rigorous proved for WSBDF2 methods in Section \ref{Se:conv}.
In Section \ref{Se:numer}, numerical examples are implemented to validate the theoretical results.
Finally, we conclude the paper with some remarks.

\section{Upper bound for the adjacent time-step ratios}\label{Se:upper}
We prove the upper bound for the adjacent time-step ratios so that the discrete
convolution kernels $b_{n-k}^{(n)}$ are positive semi-definite, which will be essential to the stability and
convergence analysis.
\begin{lemma}\cite[p.\,28]{Quarteroni:07}\label{adlemm2.1}
A real matrix $L$ of order $n$ is positive definite  if and only if  its symmetric part $H=\frac{L+L^T}{2}$ is positive definite.
%Let $H \in \mathbb{R}^{n\times n}$ be symmetric. Then $H$ is positive definite if and only if the eigenvalues of $H$ are positive.
\end{lemma}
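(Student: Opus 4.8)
The plan is to work directly from the definition of positive definiteness through the associated quadratic form rather than through eigenvalues, since $L$ need not be symmetric. Recall that a real matrix $L$ of order $n$ is called positive definite precisely when $x^T L x > 0$ for every nonzero $x \in \mathbb{R}^n$; for the (automatically symmetric) matrix $H$ this coincides with the usual spectral notion. The entire argument then reduces to comparing the two quadratic forms $x^T L x$ and $x^T H x$ and showing they agree.

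First I would split $L$ into its symmetric and skew-symmetric parts, writing $L = H + S$ with $H = \frac{1}{2}(L + L^T)$ and $S = \frac{1}{2}(L - L^T)$, so that $H^T = H$ and $S^T = -S$. The key observation is that the skew-symmetric part makes no contribution to the quadratic form: for any $x \in \mathbb{R}^n$ the quantity $x^T S x$ is a scalar and hence equals its own transpose, whence $x^T S x = (x^T S x)^T = x^T S^T x = -x^T S x$, which forces $x^T S x = 0$. Consequently $x^T L x = x^T H x + x^T S x = x^T H x$ holds for every $x \in \mathbb{R}^n$.

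With this identity in hand the equivalence is immediate: $x^T L x > 0$ for all nonzero $x$ if and only if $x^T H x > 0$ for all nonzero $x$, which is exactly the assertion that $L$ is positive definite if and only if $H$ is. I do not anticipate a genuine obstacle here, as the lemma is elementary; the only point that requires care is conceptual rather than computational, namely to adopt the quadratic-form definition of positive definiteness for the possibly non-symmetric matrix $L$ and to note that $H$ is symmetric, so that its positive definiteness is unambiguous and matches the hypothesis used later for the discrete convolution kernels.
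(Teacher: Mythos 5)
Your proof is correct and complete: decomposing $L=H+S$ into symmetric and skew-symmetric parts, observing that $x^TSx=0$ for every $x$ (a scalar equal to its own negative), and concluding $x^TLx=x^THx$ is exactly the standard argument for this fact. The paper itself offers no proof --- it simply cites the result from \cite{Quarteroni:07} --- so there is nothing to diverge from; your argument, including the care taken to use the quadratic-form definition of positive definiteness for the possibly non-symmetric $L$, is the expected one.
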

\begin{lemma}\label{Le:upper}
Let the discrete convolution kernels $b^{(n)}_{n-k}$ be defined in \eqref{1.5}. Then for any real sequence $\{w_k\}^n_{k=1}$, it holds that
\begin{equation*}
\sum^n_{k=1}w_k\sum^k_{j=1}b^{(k)}_{k-j}w_j\geq0 ~~~~{\rm for}~~n \geq 1.
\end{equation*}
\end{lemma}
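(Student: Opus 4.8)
The first move is to recognize the quadratic form as $w^{T}Bw$ with $w=(w_1,\dots,w_n)^{T}$ and $B$ the lower bidiagonal matrix of \eqref{1.9}. Indeed, since $b^{(k)}_{k-j}=0$ unless $j\in\{k-1,k\}$, the inner sum collapses to $b^{(k)}_{0}w_k+b^{(k)}_{1}w_{k-1}$ (with the convention $w_0=0$), so $\sum_{k=1}^{n}w_k\sum_{j=1}^{k}b^{(k)}_{k-j}w_j=w^{T}Bw$. By Lemma \ref{adlemm2.1} it then suffices to prove that the symmetric part $H=\tfrac12(B+B^{T})$ is positive semi-definite, and $H$ is simply the symmetric tridiagonal matrix with diagonal entries $b^{(k)}_{0}$ and off-diagonal entries $\tfrac12 b^{(k)}_{1}$. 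This reduces a matrix inequality to the analysis of a single scalar recursion along the diagonal.

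Because $\theta\ge 1/2$ forces $b^{(k)}_{1}\le 0$ while $b^{(k)}_{0}>0$, I would control the cross terms by a weighted Young's inequality. For any positive weights $\{\alpha_k\}$ one has $w_kw_{k-1}\le\tfrac12(\alpha_k w_k^{2}+\alpha_k^{-1}w_{k-1}^{2})$, and multiplying by $b^{(k)}_{1}\le 0$ reverses the sign, so
\[
w^{T}Hw=\sum_{k=1}^{n}b^{(k)}_{0}w_k^{2}+\sum_{k=2}^{n}b^{(k)}_{1}w_kw_{k-1}\ \ge\ \sum_{m=1}^{n}c_m\,w_m^{2}.
\]
Using \eqref{1.5} together with $\tau_k=r_k\tau_{k-1}$ to place every contribution over the common factor $1/\tau_m$, the interior coefficients ($2\le m\le n-1$) are
\[
\tau_m c_m=\frac{1+2\theta r_m}{1+r_m}+\frac{(1-2\theta)r_m^{2}}{2(1+r_m)}\,\alpha_m+\frac{(1-2\theta)r_{m+1}}{2(1+r_{m+1})}\,\alpha_{m+1}^{-1},
\]
with the leading term equal to $1$ at $m=1$ (where the BDF1 kernel $b^{(1)}_0=1/\tau_1$ enters) and the final term absent at $m=n$. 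Thus everything reduces to exhibiting one positive sequence $\{\alpha_k\}$, independent of $n$, for which $c_m\ge 0$ at every index.

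The heart of the proof is the selection of these weights and the ratio restriction it demands. I would choose $\alpha_k$ as an explicit function of $r_k$ so that the contribution carried out of node $m$ (the $r_m^{2}$ term) and the contribution carried into node $m$ (the $r_{m+1}$ term) are balanced; each requirement $c_m\ge 0$ then becomes a one-variable inequality in $r_m$, and demanding it to hold is exactly what forces the upper bound on the adjacent ratio $r_k$ --- the optimal ratio announced in the introduction. Below that threshold all $c_m$ are nonnegative and the sum of squares is controlled, proving the claim. The same computation can be packaged through the $LDL^{T}$ pivots of $H$, namely $p_1=b^{(1)}_0$ and $p_k=b^{(k)}_0-(b^{(k)}_1)^{2}/(4p_{k-1})$, proving $p_k\ge 0$ by induction; the inductive step reproduces the identical threshold on $r_k$.

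I expect the main obstacle to be precisely this decoupling. Since consecutive nodes share the weight $\alpha_k$, a crude term-by-term estimate that treats each $b^{(k)}_1$ in isolation is wasteful and yields a non-sharp restriction on the ratios; the difficulty is to tune the weights --- equivalently, to track the pivot recursion tightly --- so that every diagonal coefficient stays nonnegative uniformly in $n$ while still extracting the optimal bound as a function of $\theta$. The two boundary indices must be handled separately: $m=1$, where the leading coefficient is the BDF1 value $\tau_1 b^{(1)}_0=1$ and $r_1$ does not appear, and $m=n$, which carries no outgoing cross term and is therefore the least restrictive.
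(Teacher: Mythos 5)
Your skeleton is exactly the paper's: rewrite the form as $W^TBW$, invoke Lemma \ref{adlemm2.1} to pass to the symmetric tridiagonal matrix, then control that matrix either through a weighted Young inequality or through the elimination pivots. (The paper does both: the pivot recursion is precisely its proof of Lemma \ref{Le:upper}, and the weighted inequality with the specific weights $\alpha_k=r_k^{-1/2}$ is its Lemma \ref{Le:WSBDF}.) Your coefficient identity for $\tau_m c_m$ is correct, and you rightly recognize that a cap on the ratios $r_k$ is indispensable even though the lemma's statement omits it. However, the proposal stops exactly where the mathematical content lies, and this is a genuine gap: you never exhibit the weights $\alpha_k$, never state the one-variable inequality they produce, and never verify it --- you only assert that a balancing choice exists. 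With $\alpha_k=r_k^{-1/2}$ one finds $2\tau_m c_m=h(r_m,r_{m+1})$ where $h(x,y)=\frac{2(1+2\theta x)+(1-2\theta)x^{3/2}}{1+x}-\frac{(2\theta-1)y^{3/2}}{1+y}$, and proving $h\geq0$ on $[0,r_s]^2$ is itself a nontrivial monotonicity argument (increasing then decreasing in $x$, decreasing in $y$, minimum attained at the corners) that produces the cubic $(1-2\theta)^2r_s^3-4\theta^2r_s^2-4\theta r_s-1=0$ defining $r_s$; none of this appears in your proposal.

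Moreover, the pivot packaging as you state it would fail: induction on $p_k\geq0$ alone cannot close, since the subtracted term $\bigl(b_1^{(k)}\bigr)^2/(4p_{k-1})$ is unbounded as $p_{k-1}\to0^+$. One needs a quantitative induction hypothesis --- the paper proves $\ell_{k,k}\geq 2/\tau_k$, i.e. $p_k\geq1/\tau_k$ --- and with that hypothesis the inductive step reduces to $(2\theta-1)r_k\left((1-2\theta)r_k^2+4r_k+4\right)\geq0$, which holds only for $r_k\leq r_p=\frac{2+2\sqrt{2\theta}}{2\theta-1}$ of \eqref{ad2.3}. This is the \emph{suboptimal} threshold, not the optimal $r_s$ of \eqref{ad2.a4}: for $\theta=1$ one has $r_p=2+2\sqrt{2}\approx4.83$ while $r_s\approx4.86$. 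So your claims that the two packagings "reproduce the identical threshold" and that this threshold is "the optimal ratio announced in the introduction" conflate the paper's Lemma \ref{Le:upper} (pivot induction, yielding $r_p$) with its Lemma \ref{Le:WSBDF} (scale-invariant weights $r_k^{-1/2}$, yielding $r_s$); which bound you obtain depends on precisely the choice of weights, respectively induction hypothesis, that your proposal leaves unspecified.
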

\begin{proof}
It is obvious that
\begin{equation*}
\sum^n_{k=1}w_k\sum^k_{j=1}b^{(k)}_{k-j}w_j=W^TBW ~~~~{\rm for}~~n \geq 1,
\end{equation*}
where $W=(w_1,w_2,\cdots,w_n)^T$ and the matrix $B$ is defined in \eqref{1.9}.
%the matrix $B=(b_{ij})$  with entries
%\begin{equation*}
%b_{i,j}=\left \{
%\begin{aligned}
%&b_0^{(i)},\quad 1\leq i=j\leq n,\\
%&b_1^{(i)}, \quad j=i-1,\quad i=2,\dotsc,n,
%\end{aligned}
%\right .
%\end{equation*}
%and all other entries equal zero.

We introduce the symmetric tridiagonal matrix $\widetilde{B}:=B+B^T=(\widetilde{b}_{ij})$  with entries
\begin{equation*}
\widetilde{b}_{i,j}=\left \{
\begin{aligned}
&2b_0^{(i)},\quad 1\leq i=j\leq n,\\
&b_1^{(i)}, \quad j=i-1,\quad i=2,\dotsc,n,\\
&b_1^{(j)}, \quad j=i+1,\quad i=1,\dotsc,n-1,
\end{aligned}
\right .
\end{equation*}
and all other entries equal zero.
%\begin{equation*}
%B_s=\frac{1}{2}\begin{pmatrix}
%2b_0^{(1)}&b_1^{(2)}&&&\\b_1^{(2)}&2b_0^{(2)}&b_1^{(3)}&&\\&\ddots&\ddots&\ddots&\\&&b_1^{(n-1)}&2b_0^{(n-1)}&b_1^{(n)}\\&&&b_1^{(n)}&2b_0^{(n)}
%\end{pmatrix}.
%\end{equation*}

We only need to prove that the matrix $\widetilde{B}$ is positive definite.

By linear transformation, the matrix $\widetilde{B}$ can be transformed into an upper triangular matrix $L=(\ell_{ij})$  with entries
\begin{equation}\label{2.1}	
\ell_{i,j}=\left \{
\begin{aligned}
&2b_0^{(1)}, \quad i=j=1,\\
&2b_0^{(i)}-\frac{1}{\ell_{i-1,i-1}}\left(b_1^{(i)}\right)^2,\quad 2\leq i=j\leq n,\\
&b_1^{(j)}, \quad j=i+1,\quad i=1,\dotsc,n-1,
\end{aligned}
\right .
\end{equation}
and all other entries equal zero.
With this notation, we have $\ell_{1,1}=2b_0^{(1)}=\frac{2}{\tau_1}$.
The main task is to prove
\begin{equation}\label{2.2}	
\ell_{i,i}\geq \frac{2}{\tau_i}, \quad 2\leq i\leq n.
\end{equation}

Below we use the mathematical induction to  prove \eqref{2.2}.
For $i=2,$ we have
\begin{equation*}
\begin{split}
\ell_{2,2}&=2b_0^{(2)}-\frac{1}{\ell_{1,1}}\left(b_1^{(2)}\right)^2=\frac{2\left(1+2\theta r_2\right)}{\tau_2\left(1+r_2\right)}-\frac{\tau_1\left(1-2\theta\right)^2r_2^4}{2\tau_2^2\left(1+r_2\right)^2}\\
&=\frac{2\left(1+2\theta r_2\right)}{\tau_2(1+r_2)}-\frac{(1-2\theta)^2r_2^3}{2\tau_2(1+r_2)^2}
=\frac{4\left(1+2\theta r_2\right)(1+r_2)-(1-2\theta)^2r_2^3}{2\tau_2(1+r_2)^2}\\
&=\frac{4(1+r_2)^2-4(1+r_2)^2+4\left(1+2\theta r_2\right)(1+r_2)-(1-2\theta)^2r_2^3}{2\tau_2(1+r_2)^2}\\
&=\frac{2}{\tau_2}+\frac{-4\left(1+r_2\right)^2+4\left(1+2\theta r_2\right)\left(1+r_2\right)-\left(1-2\theta\right)^2r_2^3}{2\tau_2(1+r_2)^2}.
\end{split}
\end{equation*}
If the time-step ratios $0<r_k\leq r_p$, where the suboptimal  adjacent  time-step  ratios
\begin{equation*}
r_p=\frac{2+2\sqrt{2\theta}}{2\theta-1},
\end{equation*}
is the positive root of the equation $\left(1-2\theta\right)r_p^2+4r_p+4=0$, then the following inequality holds
\begin{equation*}
\begin{split}
&-4\left(1+r_2\right)^2+4\left(1+2\theta r_2\right)\left(1+r_2\right)-\left(1-2\theta\right)^2r_2^3\\
&=\left(2\theta-1\right)r_2\left(\left(1-2\theta\right)r_2^2+4r_2+4\right)\geq0.
\end{split}
\end{equation*}
Therefore, E.q. \eqref{2.2} is proven for the case $i=2.$

Suppose now \eqref{2.2} holds for all $2\leq i\leq n-1$, we need to prove that it holds for $i=n.$
For $i=n,$ using the induction assumption, we get
\begin{equation*}
\begin{split}
\ell_{n,n}&=2b_0^{(n)}-\frac{1}{\ell_{n-1,n-1}}\left(b_1^{(n)}\right)^2=\frac{2\left(1+2\theta r_n\right)}{\tau_n\left(1+r_n\right)}-\frac{\tau_{n-1}\left(1-2\theta\right)^2r_n^4}{2\tau_n^2\left(1+r_n\right)^2}\\
&=\frac{2\left(1+2\theta r_n\right)}{\tau_n(1+r_n)}-\frac{(1-2\theta)^2r_n^3}{2\tau_n(1+r_n)^2}
=\frac{4\left(1+2\theta r_n\right)(1+r_n)-(1-2\theta)^2r_n^3}{2\tau_n(1+r_n)^2}\\
&=\frac{4(1+r_n)^2-4(1+r_n)^2+4\left(1+2\theta r_n\right)(1+r_n)-(1-2\theta)^2r_n^3}{2\tau_n(1+r_n)^2}\\
&=\frac{2}{\tau_n}+\frac{-4\left(1+r_n\right)^2+4\left(1+2\theta r_n\right)\left(1+r_n\right)-\left(1-2\theta\right)^2r_n^3}{2\tau_n(1+r_n)^2}.
\end{split}
\end{equation*}
By the similar way, we have
\begin{equation*}
-4\left(1+r_n\right)^2+4\left(1+2\theta r_n\right)\left(1+r_n\right)-\left(1-2\theta\right)^2r_n^3\geq0,~~0<r_n\leq r_p.
\end{equation*}
Therefore, E.q. \eqref{2.2} is proven for the case $i=n.$

Hence, the matrix $\widetilde{B}$ is positive definite by Sylvester's criterion \cite[p.\,439]{HJ},
since every leading or trailing principal minor of $\widetilde{B}$ is positive, i.e., $\det |\widetilde{B}_{l\times l}|=\Pi_{i=1}^l l_{i,i}>0,~~\forall l\geq1$.
Consequently, the matrix $B$ is also positive definite   by Lemma \ref{adlemm2.1}.
\end{proof}

From the proof of  Lemma \ref{adlemm2.1}, we obtain the suboptimal and intuitive estimates for   time-step ratios
\begin{equation}\label{ad2.3}
r_p=\frac{2+2\sqrt{2\theta}}{2\theta-1}.
\end{equation}
We next give the optimal time-step ratios $r_s$ (see Figure \ref{Fig.ratio}), namely,
\begin{equation}\label{ad2.a4}
r_s=\frac{4\theta^2-\sqrt[3]{-4\theta^2E+3(1-2\theta)^2\frac{-F+\sqrt{G}}{2}}-\sqrt[3]{-4\theta^2E+3(1-2\theta)^2\frac{-F-\sqrt{G}}{2}}}{3(1-2\theta)^2}
\end{equation}
with
\begin{equation*}
\begin{split}
E&=16\theta^4+48\theta^3-48\theta^2+12\theta,\\
F&=16\theta^3+36\theta^2-36\theta+9,\\
G &=384\theta^5+912\theta^4-2496\theta^3+1944\theta^2-648\theta+81.
\end{split}
\end{equation*}
\begin{figure}[htb]
  \centering
  % Requires \usepackage{graphicx}
  \includegraphics[width=6cm]{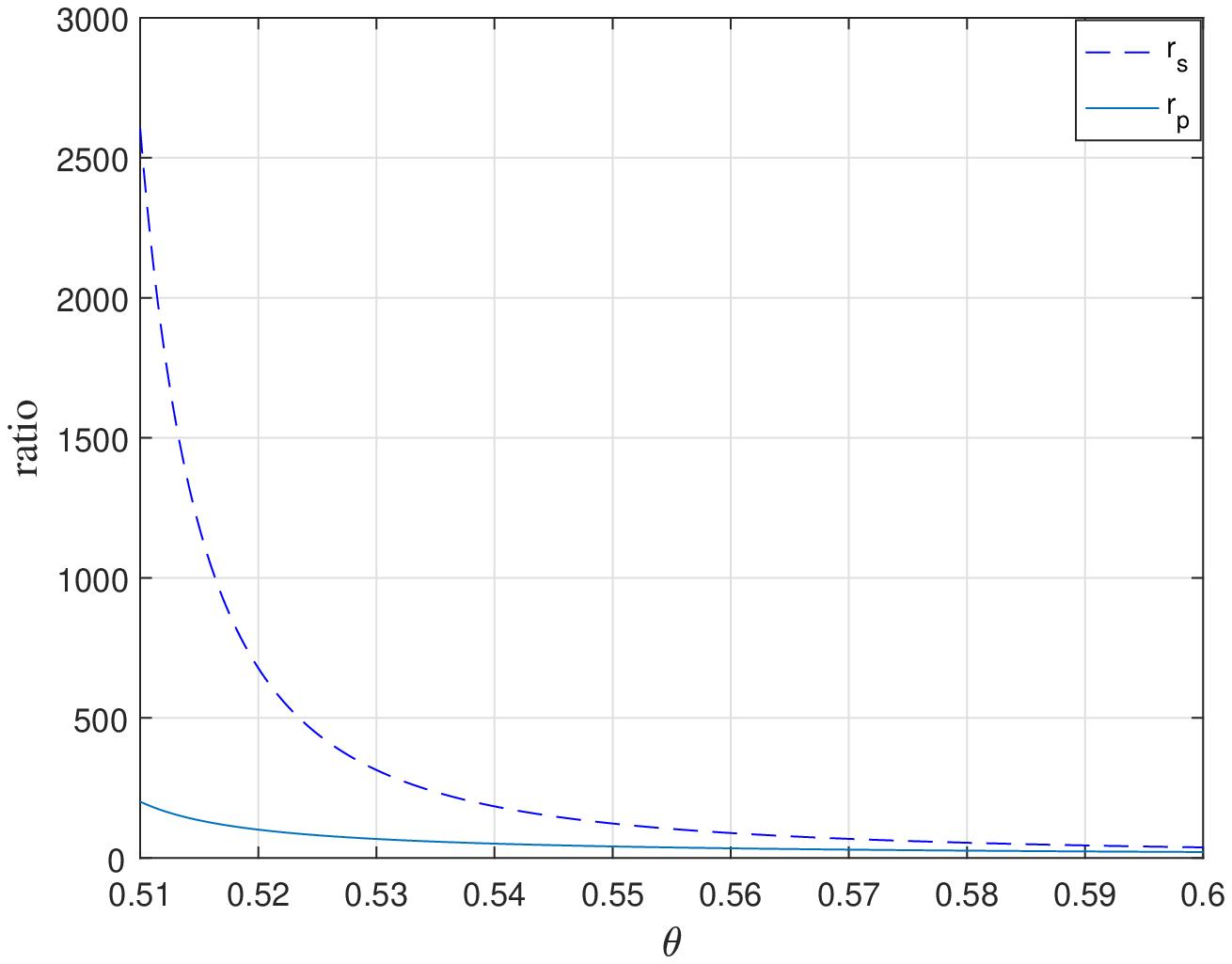}~~
    \includegraphics[width=6cm]{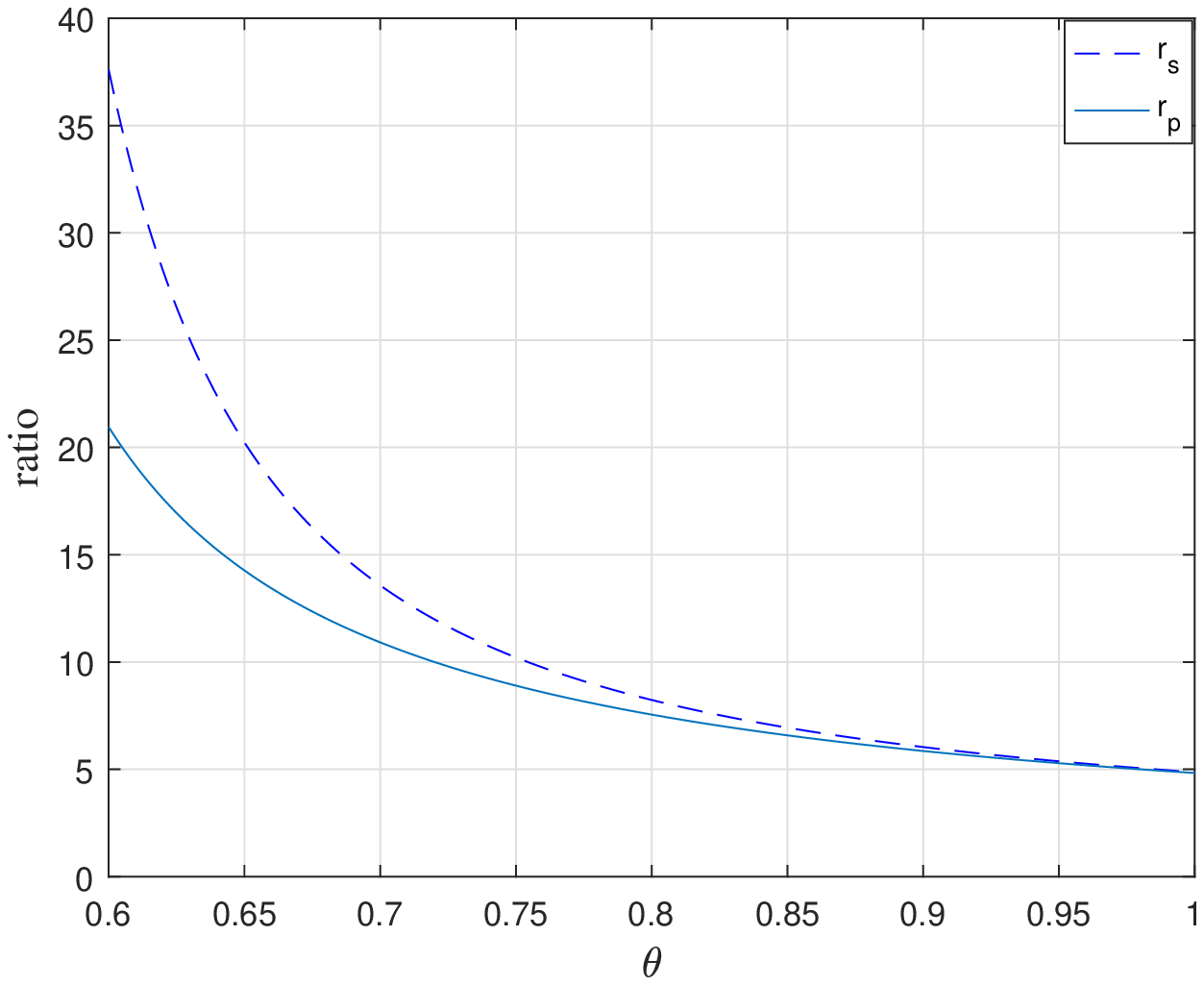}
    \caption{\small The graphs of the ratios $r_p$ and $r_s$ of  \eqref{ad2.3}  and \eqref{ad2.a4}.}  \label{Fig.ratio}
\end{figure}
\begin{lemma}\label{Le:WSBDF}
Let $r_s$ be given in \eqref{ad2.a4} and the adjacent step ratios $r_k$ satisfy $0<r_k\leq r_s$. For any real sequence $\{\omega_k\}_{k=1}^n$, it holds that
\begin{equation*}
2\omega_k\sum_{j=1}^kb_{k-j}^{(k)}\omega_j\geq\frac{(2\theta-1)r_{k+1}^{3/2}}{(1+r_{k+1})}\frac{\omega_{k}^2}{\tau_k}
-\frac{(2\theta-1)r_{k}^{3/2}}{(1+r_{k})}\frac{\omega_{k-1}^2}{\tau_{k-1}}\quad \forall k\geq2.
\end{equation*}
So the discrete convolution kernels $b_{n-k}^{(n)}$ defined in \eqref{1.5} are positive semi-definite,
\begin{equation*}
\sum_{k=1}^n\omega_k\sum_{j=1}^kb_{k-j}^{(k)}\omega_j\geq0.
\end{equation*}
\end{lemma}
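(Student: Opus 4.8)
The plan is to exploit the sparsity of the kernels. Since $b_j^{(k)}=0$ for $2\le j\le k-1$ by \eqref{1.5}, for every $k\ge2$ the inner sum collapses to two terms,
\[
\sum_{j=1}^k b_{k-j}^{(k)}\omega_j = b_0^{(k)}\omega_k + b_1^{(k)}\omega_{k-1},
\]
so the left-hand side $2\omega_k\sum_j b_{k-j}^{(k)}\omega_j=2b_0^{(k)}\omega_k^2+2b_1^{(k)}\omega_k\omega_{k-1}$ is a binary quadratic form in $(\omega_k,\omega_{k-1})$. I would therefore prove the displayed two-term estimate as a pointwise-in-$k$ statement and then sum it. Using $\tau_{k-1}=\tau_k/r_k$ and clearing $\tau_k$, the asserted inequality is equivalent to the nonnegativity of
\[
Q_k(\omega_k,\omega_{k-1}):=A_k\omega_k^2+2B_k\omega_k\omega_{k-1}+C_k\omega_{k-1}^2,
\]
with $A_k=\frac{2(1+2\theta r_k)}{1+r_k}-\frac{(2\theta-1)r_{k+1}^{3/2}}{1+r_{k+1}}$, $B_k=\frac{(1-2\theta)r_k^2}{1+r_k}$ and $C_k=\frac{(2\theta-1)r_k^{5/2}}{1+r_k}$.

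Since $\theta\in[1/2,1]$ gives $C_k\ge0$, the form $Q_k$ is positive semidefinite as soon as $A_kC_k-B_k^2\ge0$ (which in turn forces $A_k\ge0$). A direct factorization, pulling out $\frac{(2\theta-1)r_k^{5/2}}{1+r_k}$, yields
\[
A_kC_k-B_k^2=\frac{(2\theta-1)r_k^{5/2}}{1+r_k}\bigl(\psi(r_k)-\phi(r_{k+1})\bigr),
\]
where I set $\phi(r):=\frac{(2\theta-1)r^{3/2}}{1+r}$ and $\psi(r):=\frac{2(1+2\theta r)}{1+r}-\phi(r)$. Thus the entire lemma reduces to the scalar inequality $\psi(r_k)\ge\phi(r_{k+1})$ whenever $r_k,r_{k+1}\in(0,r_s]$.

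For the scalar step I would argue by monotonicity. One checks $\phi'(r)=(2\theta-1)\frac{r^{1/2}(3+r)}{2(1+r)^2}\ge0$, so $\phi$ is increasing and it suffices to prove $\psi(r)\ge\phi(r_s)$ for all $r\in(0,r_s]$. Computing $\psi'$ over $(1+r)^2$ and substituting $x=\sqrt r$ to clear the half-integer powers, the numerator becomes $(2\theta-1)\bigl(2-\tfrac32 x-\tfrac12 x^3\bigr)=-\tfrac12(2\theta-1)(x-1)(x^2+x+4)$, which changes sign only at $x=1$; hence $\psi$ increases on $(0,1)$, decreases on $(1,\infty)$, and its minimum over $(0,r_s]$ is attained at an endpoint, namely $\min\{\psi(0^+),\psi(r_s)\}=\min\{2,\phi(r_s)\}$. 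The number $r_s$ is by construction the positive root of $1+2\theta r=(2\theta-1)r^{3/2}$, equivalently---after squaring---of the cubic $(1-2\theta)^2r^3-4\theta^2r^2-4\theta r-1=0$ whose positive root is given by \eqref{ad2.a4}; this gives both $\psi(r_s)=\phi(r_s)$ and, from the defining relation, $\phi(r_s)=\frac{1+2\theta r_s}{1+r_s}\le2$ because $\theta\le1$. Therefore $\min_{(0,r_s]}\psi=\phi(r_s)$, so $\psi(r_k)\ge\phi(r_s)\ge\phi(r_{k+1})$ and the pointwise estimate holds for each $k\ge2$.

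Finally I would sum. Writing $G_k:=\phi(r_{k+1})\omega_k^2/\tau_k\ge0$, summing the pointwise estimate over $k=2,\dots,n$ telescopes the right-hand side to $G_n-G_1$ (with the harmless top term $G_n\ge0$ kept or, equivalently, obtained by letting $r_{n+1}\to0^+$ in the pointwise bound at $k=n$). Adding the $k=1$ contribution $2b_0^{(1)}\omega_1^2=2\omega_1^2/\tau_1$ and using $\phi(r_2)\le\phi(r_s)\le2$ shows $2\sum_{k=1}^n\omega_k\sum_j b_{k-j}^{(k)}\omega_j\ge\frac{2\omega_1^2}{\tau_1}-G_1\ge0$, which is the claimed semidefiniteness. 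The main obstacle is the scalar step: the reduction to $Q_k$ and its discriminant is routine, but one must confirm that the threshold \eqref{ad2.a4} produced by Cardano's formula is exactly the value at which the discriminant degenerates ($\psi(r_s)=\phi(r_s)$) and that no interior ratio violates $\psi\ge\phi(r_s)$. The half-integer powers $r^{3/2},r^{5/2}$ make this awkward, so the substitution $x=\sqrt r$---turning every comparison into a polynomial inequality---together with the clean identity $\phi(r_s)=\frac{1+2\theta r_s}{1+r_s}$ is what makes the endpoint argument succeed.
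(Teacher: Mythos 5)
Your proposal is correct and follows essentially the same route as the paper: both reduce the lemma to the pointwise two-term bound, then to the scalar inequality $\psi(r_k)-\phi(r_{k+1})\geq 0$ (the paper's $h(r_k,r_{k+1})\geq 0$ with the same unimodality-in-$x$ and monotonicity-in-$y$ argument), use the same defining cubic $(1-2\theta)^2r_s^3-4\theta^2r_s^2-4\theta r_s-1=0$ for $r_s$, and finish with the same telescoping sum plus the $k=1$ term and the bound $\phi(r_2)\leq 2$. The only cosmetic difference is that you certify the residual $2\times 2$ quadratic form by its discriminant, whereas the paper applies $-2ab\geq -a^2-b^2$ to the cross term; in the scaled variables $u_k=\omega_k/\sqrt{\tau_k}$ the off-diagonal entry equals the corner coefficient $\phi(r_k)$, so the two conditions coincide, and your more explicit derivative computations merely flesh out what the paper labels ``easy to check.''
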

\begin{proof}
Applying the inequality $-2ab\geq -a^2-b^2$ and taking  $u_k=\omega_k/\sqrt{\tau_k}$, one has
\begin{equation*}
\begin{split}
2\omega_k\sum_{j=1}^kb_{k-j}^{(k)}\omega_j&=2\tau_kb_{0}^{(k)}u_k^2+2\sqrt{\tau_k\tau_{k-1}}b_1^{(k)}u_ku_{k-1}\\
&\geq 2\tau_kb_{0}^{(k)}u_k^2+\sqrt{\tau_k\tau_{k-1}}b_1^{(k)}\left(u_k^2+u_{k-1}^2\right)\\
%&=\left(2b_{0}^{(k)}+2\epsilon b_1^{(k)}\right)\omega_k^2+\frac{b_1^{(k)}}{2\epsilon}\omega_{k-1}^2\\
&=\frac{2\left(1+2\theta r_k\right)+\left(1-2\theta\right) r_k^{3/2}}{1+r_k}u_k^2-\frac{\left(2\theta-1\right)r_k^{3/2}}{ (1+r_k)}u^2_{k-1}\\
&=\frac{\left(2\theta-1\right)r_{k+1}^{3/2}}{ (1+r_{k+1})}\frac{\omega^2_{k}}{\tau_k}-\frac{\left(2\theta-1\right)r_k^{3/2}}{ (1+r_k)}\frac{\omega^2_{k-1}}{\tau_{k-1}}\\
&\quad+\left(\frac{2\left(1+2\theta r_k\right)+\left(1-2\theta\right) r_k^{3/2}}{1+r_k}-\frac{\left(2\theta-1\right)r_{k+1}^{3/2}}{ (1+r_{k+1})}\right)\frac{\omega_{k}^2}{\tau_{k}}\quad \forall k\geq2.
\end{split}
\end{equation*}
Let $r_s$ in \eqref{ad2.a4}
be  the positive root of the following  equation
$$(1-2\theta)^2r_s^3-4\theta^2r_s^2-4\theta r_s-1=0.$$
Let
\begin{equation*}
h(x,y):=\frac{2\left(1+2\theta x\right)+\left(1-2\theta\right)x^{3/2}}{1+x}-\frac{(2\theta-1)y^{3/2}}{1+y}.
\end{equation*}
It is easy to check that  $h(x,y)$ is increasing in $(0, 1)$ and decreasing in $(1,r_s)$ with respect to $x$;
and $h(x,y)$ is decreasing with respect to $y$. Then we have
\begin{equation*}
h(x,y)\geq \min\left\{h(0,r_s),h(r_s,r_s)\right\}=\frac{(1-2\theta)^2r_s^3-4\theta^2r_s^2-4\theta r_s-1}{1+r_s}=0~~{\rm for}~~0\leq x,y \leq r_s.
\end{equation*}
Thus it follows that
\begin{equation*}
2\omega_k\sum_{j=1}^kb_{k-j}^{(k)}\omega_j\geq\frac{(2\theta-1)r_{k+1}^{3/2}}{(1+r_{k+1})}\frac{\omega_{k}^2}{\tau_k}-\frac{(2\theta-1)r_{k}^{3/2}}{(1+r_{k})}\frac{\omega_{k-1}^2}{\tau_{k-1}}~~~~
\quad~~~~\forall 0<r_k\leq r_s.
\end{equation*}
Therefore, we obtain
\begin{equation*}
\begin{split}
2\sum_{k=1}^{n}\omega_k\sum_{j=1}^kb_{k-j}^{(k)}\omega_j&\geq\frac{2}{\tau_1}\omega_1^2+\frac{(2\theta-1)r_{n+1}^{3/2}}{(1+r_{n+1})}\frac{\omega_n^2}{\tau_n}-\frac{(2\theta-1)r_{2}^{3/2}}{(1+r_{2})}\frac{\omega_1^2}{\tau_1}\\
&=\frac{(2\theta-1)r_{n+1}^{3/2}}{(1+r_{n+1})}\frac{\omega_n^2}{\tau_n}+\frac{2+2r_2-(2\theta-1)r_2^{3/2}}{(1+r_{2})}\frac{\omega_1^2}{\tau_1}\geq0,
\end{split}
\end{equation*}
where we use
$$\frac{2+2r_2-(2\theta-1)r_2^{3/2}}{1+r_{2}}=\frac{1+2(1-\theta)r_2}{1+r_{2}}+\frac{1}{2}h(r_2,r_2)\geq 0.$$
In particular, it leads to   $r_s=4.8645365123$ if  $\theta=1$ and $r_s=\infty$ if  $\theta=1/2$.
\end{proof}

\begin{remark}\label{remark2.1}
 From \eqref{2.1}, we can check  that
$\ell_{1,1}=\frac{1}{\tau_1}\cdot l_1$ and $\ell_{k,k}=\frac{1}{\tau_k}\cdot l_k$
with
\begin{equation}\label{ad2.a5}
l_1=2,~~l_k=\frac{2}{\left(1+r_k\right)^2}\left[ \left(1+r_k\right)\left(1+2\theta r_k\right)-\frac{1}{2l_{k-1}}\left(2\theta-1\right)^2r_k^3   \right], ~~\forall k\geq 2.
\end{equation}
We know that the matrix $\widetilde{B}$ is positive definite if and only if $l_k>0$.
Figure \ref{Fig.2.1} shows that the optimal maximum ratios $r_s=4.8645$ for variable size BDF2, since $l_k<0$ when $r_s=4.8646$.
Moreover, choosing  $\theta=1$ is better than $\theta>1$, since the maximum ratio is too narrow for the latter.
On the other hand, $l_k\equiv 2$ for $\theta=1/2$, which implies that the ratio is $r_s=\infty$ (without restriction).
\end{remark}

%%%%%%%%%%%
\begin{figure}[htb]
  \centering
  % Requires \usepackage{graphicx}
  \includegraphics[width=6cm]{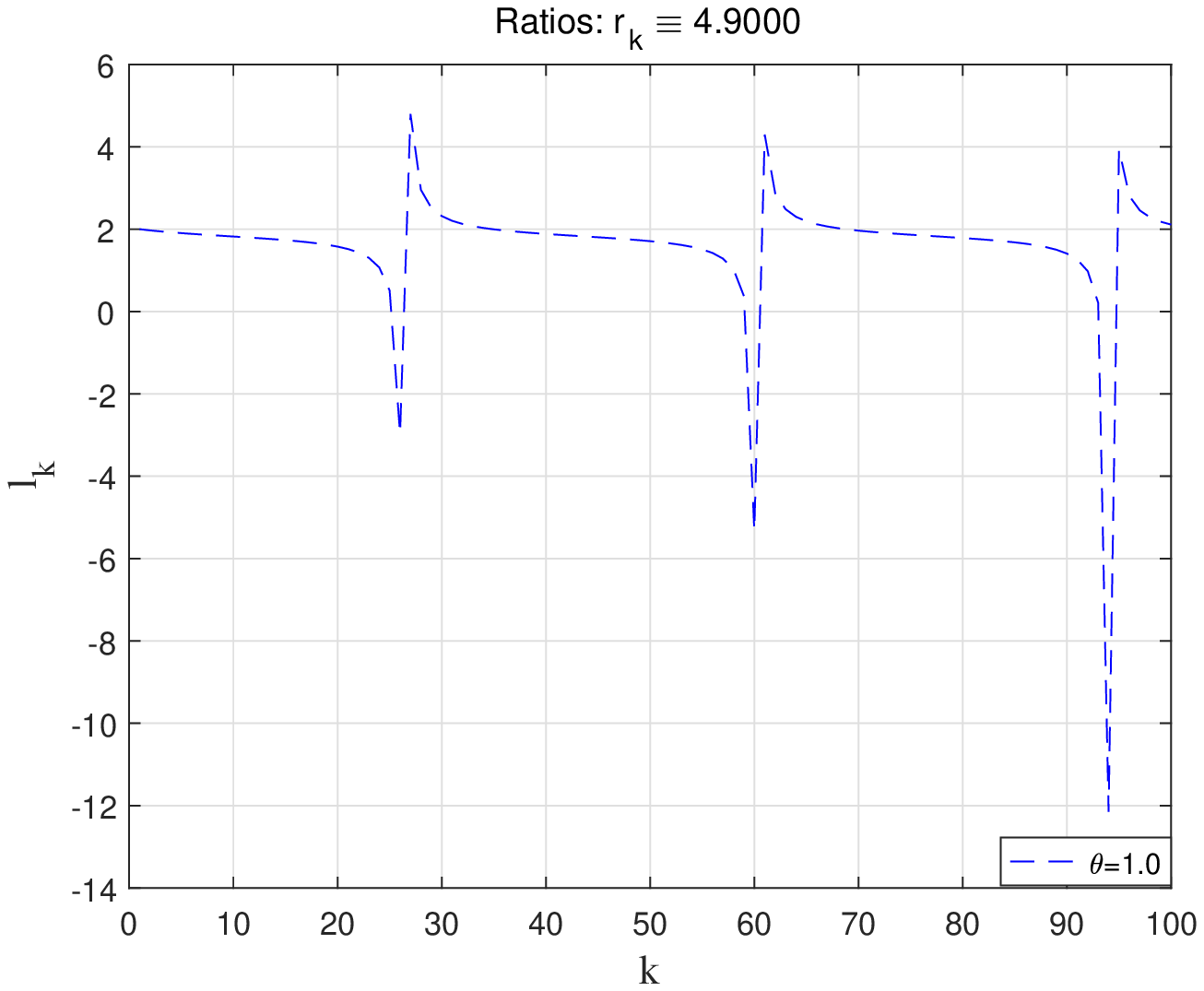}~~
    \includegraphics[width=6cm]{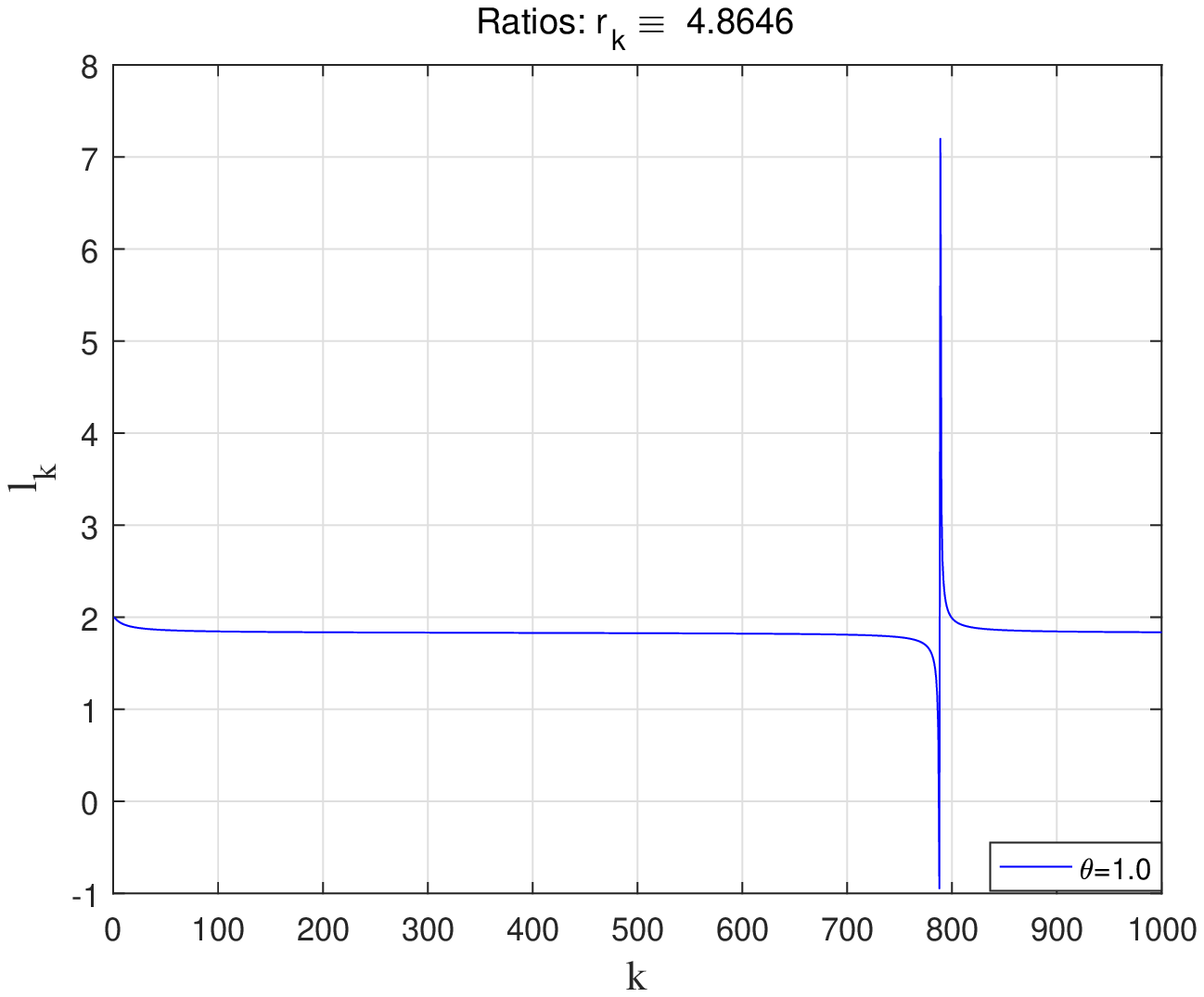}~~\\
    \includegraphics[width=6cm]{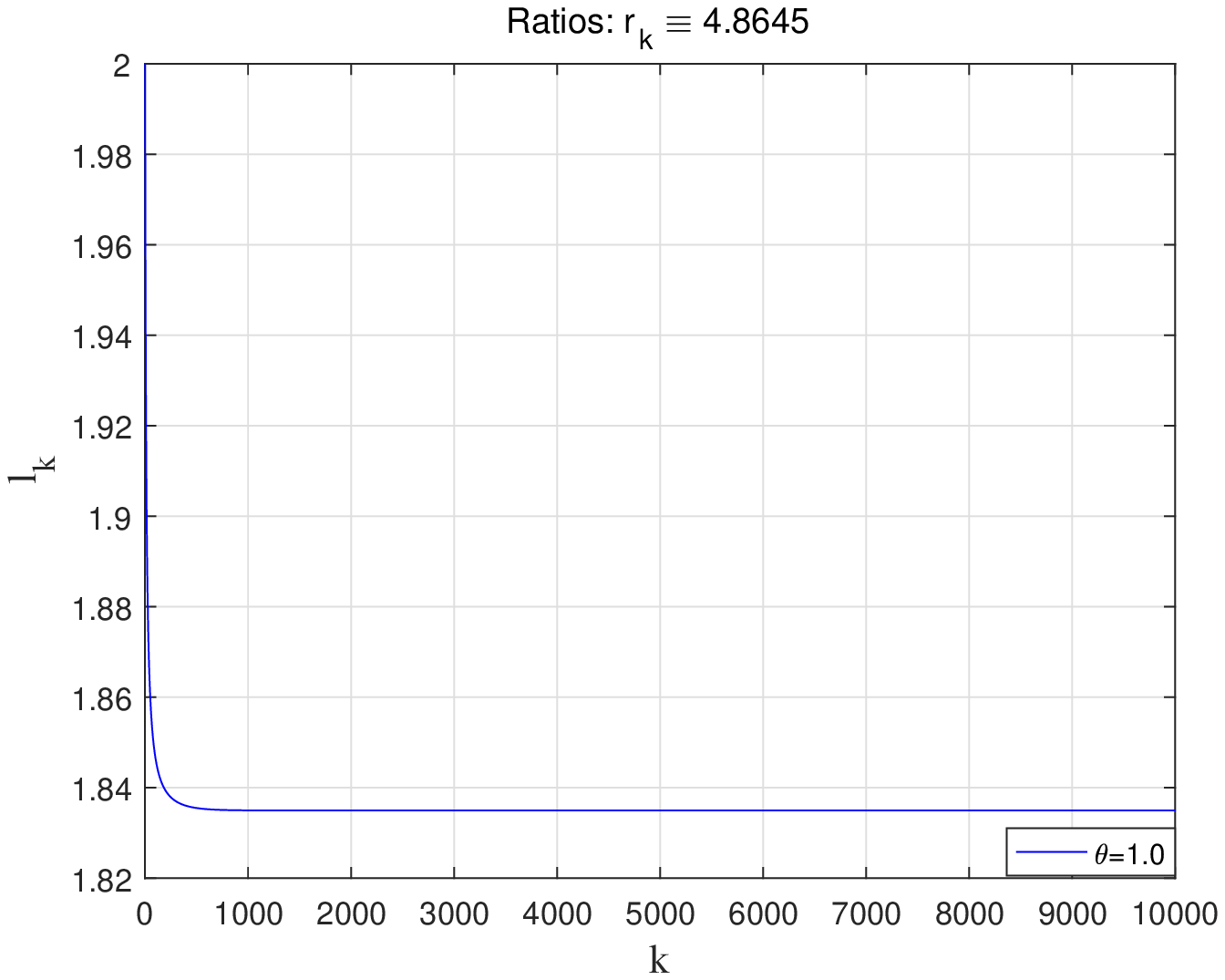}~~
    \includegraphics[width=6cm]{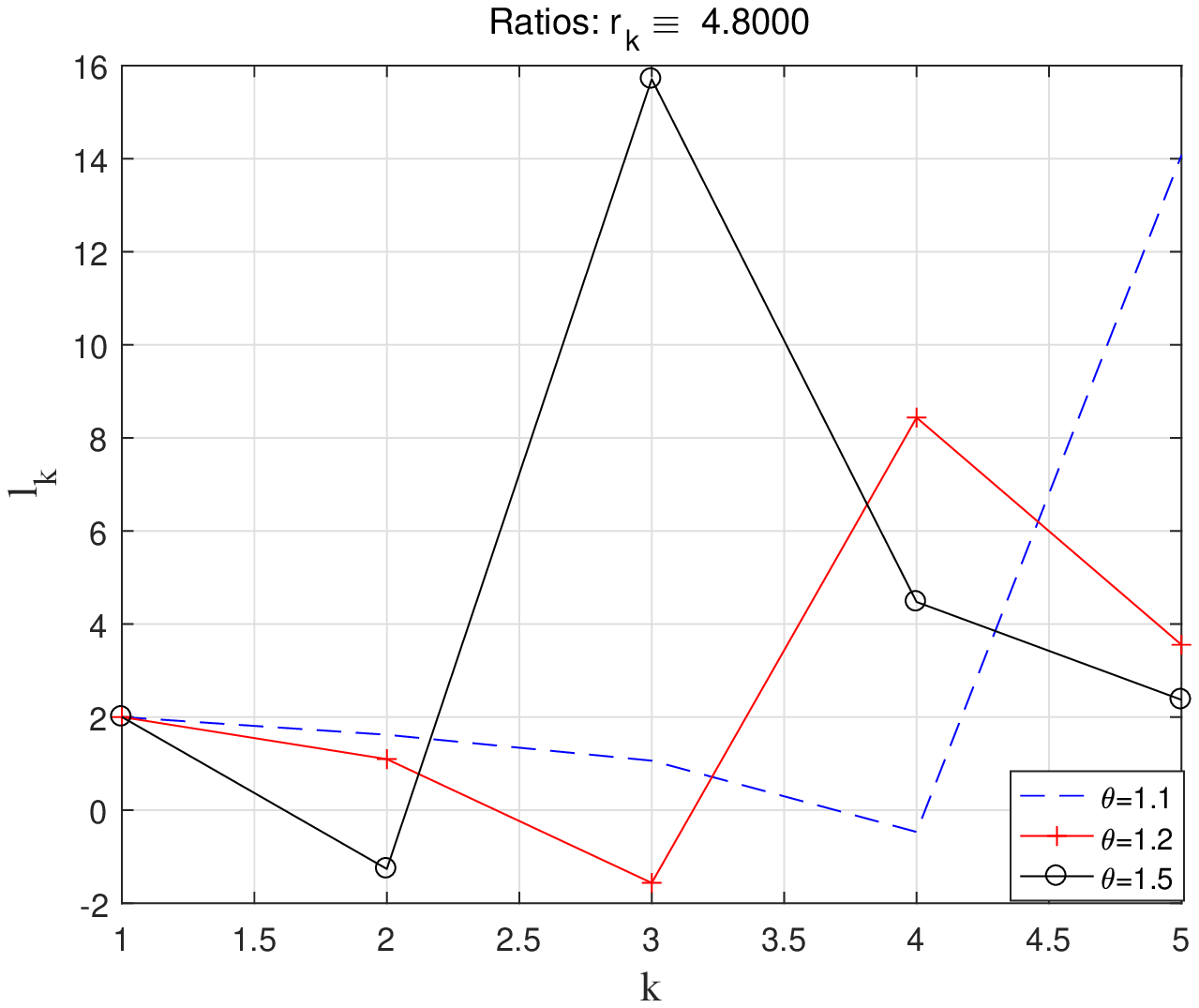}~~
    \caption{\small Simulations to  maximum (constant) ratios $r_s$ versus $\theta$.}  \label{Fig.2.1}
\end{figure}
%
%\begin{remark}
%In fact, the best value of $\epsilon$ increases as $\theta$ increases, see Figure \ref{Fi:NO}, left. For $\theta=1$, the best value is $\epsilon=\frac{377}{1663}$. For example, the best value is $\epsilon=\frac{3}{25}$ and the biggest $r_k=17.4083$. However, in our example, when $\theta\in[1/2,1]$, the $\epsilon=\frac{377}{1663}$. Hence, when $\theta=\frac{2}{3}$, the  biggest $r_k=13.7429$. On the other hand, for $\theta=\frac{1}{2}$, the value of $r_k$ is unrestricted whatever value $\epsilon$ takes, see Figure \ref{Fi:NO}, right.
%\begin{figure}[htbp]
%\subfigure{
%\begin{minipage}[t]{0.45\linewidth}
%\includegraphics[height=5cm,width=6.5cm]{test8.jpg}
%\end{minipage}
%}
%\subfigure{
%\begin{minipage}[t]{0.45\linewidth}
%\includegraphics[height=5cm,width=6.5cm]{test7.jpg}
%\end{minipage}
%}
%\caption{$r_k$ with $\epsilon$ and $r_k$ with $\theta$, left and right, respectively.}
%\label{Fi:NO}
%\end{figure}
%\end{remark}

\begin{lemma}\label{Le:posi}
If the WSBDF2 discrete convolution kernels $b_{n-k}^{(n)}$ in \eqref{1.5} are positive semi-definite, the DOC kernels $d_{n-k}^{(n)}$ defined in \eqref{1.6} are also positive semi-definite. For any real sequence $\{w_k\}_{k=1}^n$, it holds that
\begin{equation*}
\sum_{k=1}^nw_k\sum_{j=1}^kd_{k-j}^{(k)}w_j\geq0\quad{\rm for}~~n\geq1.
\end{equation*}
\end{lemma}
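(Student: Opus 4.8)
The plan is to recast the target double sum as a matrix quadratic form and then transfer the positivity from $B$ to $D=B^{-1}$ through a single change of variables, using the relation \eqref{1.10}. First I would identify the sum with a quadratic form: writing $W=(w_1,\dots,w_n)^T$ and recalling from \eqref{1.9} that the lower-triangular matrix $D$ has entries $D_{kj}=d_{k-j}^{(k)}$ for $1\le j\le k$ and $D_{kj}=0$ otherwise, the quantity to be bounded is exactly
\begin{equation*}
\sum_{k=1}^n w_k \sum_{j=1}^k d_{k-j}^{(k)} w_j = W^T D W.
\end{equation*}
The standing hypothesis of the lemma---that the kernels $b_{n-k}^{(n)}$ are positive semi-definite---says precisely that $V^T B V\ge 0$ for every real vector $V$, where $B$ is the lower bidiagonal matrix of \eqref{1.9}; this is exactly what Lemma \ref{Le:upper} and Lemma \ref{Le:WSBDF} supply.

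Next comes the change of variables. Because $B$ is lower bidiagonal with strictly positive diagonal entries $b_0^{(k)}>0$, it is invertible, and \eqref{1.10} gives $D=B^{-1}$, so in particular $DB=BD=I$. Setting $V:=DW$, equivalently $W=BV$ (indeed $BV=BDW=W$), I would compute
\begin{equation*}
W^T D W = (BV)^T D (BV) = V^T B^T (DB) V = V^T B^T V = V^T B V,
\end{equation*}
where the middle equality uses $DB=I$ and the last uses the fact that the scalar $V^T B^T V$ coincides with its own transpose $V^T B V$. Applying the hypothesis to this particular $V$ then yields $W^T D W = V^T B V \ge 0$, which is the assertion.

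I do not expect a genuine obstacle here: once the two quadratic forms are written down, the argument is a three-line manipulation. The only points deserving care are verifying that the cross term collapses correctly via $B^T(DB)=B^T$ rather than leaving a stray factor, and recording that the positivity of the diagonal $b_0^{(k)}$ makes $B$ invertible, so that the substitution $V=DW$ is legitimate. It is worth emphasizing that this argument never requires $B$ to be symmetric---only that its associated quadratic form be nonnegative---which is precisely the form in which the positive semi-definiteness of the convolution kernels was established earlier.
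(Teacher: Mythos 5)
Your proof is correct and follows essentially the same route as the paper: identify the sum with the quadratic form $W^TDW$, invoke the positive semi-definiteness of $B$ from Lemmas \ref{Le:upper}/\ref{Le:WSBDF}, and transfer it to $D=B^{-1}$ via \eqref{1.10}. The only difference is that the paper dismisses the last step as ``easy to obtain,'' whereas you supply the actual substitution $W=BV$ giving $W^TDW=V^TB^T(DB)V=V^TBV\geq 0$, together with the correct observations that invertibility of $B$ comes from its triangular structure with positive diagonal (not from semi-definiteness) and that no symmetry of $B$ is needed.
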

\begin{proof}
It is obvious that
\begin{equation*}
\sum^n_{k=1}w_k\sum^k_{j=1}d^{(k)}_{k-j}w_j=W^TDW ~~~~{\rm for}~~n \geq 1,
\end{equation*}
where $W=(w_1,w_2,\cdots,w_n)^T$ and the matrix $D$ is defined in \eqref{1.9}.

According to Lemma \ref{Le:upper} or  Lemma \ref{Le:WSBDF}, the matrix $B$ is positive semi-definite.
From \eqref{1.10}, it is easy to obtain the matrix $D$ is also positive semi-definite.
\end{proof}

\begin{corollary}\label{Cor:DOC}
The DOC kernels $d_{n-k}^{(n)}$  in \eqref{1.6}  fulfill
\begin{equation*}
\sum_{j=1}^nd_{n-j}^{(n)}\equiv\tau_n \quad{\rm such ~~that}\quad \sum_{k=1}^n\sum_{j=1}^kd_{k-j}^{(k)}\equiv t_n \quad{\rm for}~~n\geq1.
\end{equation*}
\end{corollary}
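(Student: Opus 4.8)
The plan is to recast the first identity $\sum_{j=1}^{n}d_{n-j}^{(n)}=\tau_n$ as a single matrix equation and then reduce it, via the relation $D=B^{-1}$ recorded in \eqref{1.10}, to a short direct verification on $B$. Writing $\mathbf{1}=(1,\dots,1)^T$ and the step-size vector $\boldsymbol{\tau}=(\tau_1,\dots,\tau_n)^T$, the sum $\sum_{j=1}^{n}d_{n-j}^{(n)}$ is precisely the $n$-th entry of the product $D\mathbf{1}$, where $D$ is the lower triangular matrix in \eqref{1.9}. Hence the claim $\sum_{j=1}^{n}d_{n-j}^{(n)}=\tau_n$ is equivalent to the vector identity $D\mathbf{1}=\boldsymbol{\tau}$, and because $D=B^{-1}$, it is in turn equivalent to the assertion $B\boldsymbol{\tau}=\mathbf{1}$.

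First I would verify $B\boldsymbol{\tau}=\mathbf{1}$ row by row using the explicit kernels in \eqref{1.5}. The first row gives $b_0^{(1)}\tau_1=\tau_1/\tau_1=1$. For $k\geq2$, the lower bidiagonal structure of $B$ makes the $k$-th entry equal to $b_1^{(k)}\tau_{k-1}+b_0^{(k)}\tau_k$; substituting $\tau_{k-1}=\tau_k/r_k$ together with the values of $b_0^{(k)}$ and $b_1^{(k)}$ produces $\frac{(1+2\theta r_k)+(1-2\theta)r_k}{1+r_k}$, in which the $\theta$-dependent terms cancel to leave $\frac{1+r_k}{1+r_k}=1$. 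This establishes $B\boldsymbol{\tau}=\mathbf{1}$, hence $D\boldsymbol{1}=\boldsymbol{\tau}$, i.e.\ the first identity for every $n\geq1$.

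The second identity then follows immediately by summing the first over $k$: since $\sum_{j=1}^{k}d_{k-j}^{(k)}=\tau_k$, we obtain $\sum_{k=1}^{n}\sum_{j=1}^{k}d_{k-j}^{(k)}=\sum_{k=1}^{n}\tau_k=t_n-t_0=t_n$, using the telescoping of the step sizes and $t_0=0$.

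I do not expect any genuine obstacle here; the only points requiring care are the bookkeeping of indices in the bidiagonal matrix $B$ and the clean cancellation of the $\theta$-terms. Conceptually, the computation simply reflects that the WSBDF2 operator is exact on the linear test function $v(t)=t$: the identity $B\boldsymbol{\tau}=\mathbf{1}$ says that $D_2$ applied to the node sequence $\{t_k\}$ equals $1$ at every level, so its orthogonal inverse $D$ returns the step sizes $\tau_k$ when fed the constant sequence. This viewpoint also makes the result robust, as it does not use the positive semi-definiteness established in the preceding lemmas, only the defining orthogonality $D=B^{-1}$.
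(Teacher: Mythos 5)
Your proof is correct. It rests on exactly the same algebraic kernel identity as the paper's proof --- namely $b_1^{(k)}\tau_{k-1}+b_0^{(k)}\tau_k=1$ for $k\geq 2$ (and $b_0^{(1)}\tau_1=1$), which is your row-by-row verification of $B\boldsymbol{\tau}=\mathbf{1}$ --- but the mechanism for transferring this fact to the DOC kernels differs. The paper proves $\sum_{j=1}^k d_{k-j}^{(k)}\equiv\tau_k$ by induction on $k$: the base case is $d_0^{(1)}=1/b_0^{(1)}=\tau_1$, and the inductive step uses the scaling relation between the DOC kernels at consecutive levels (each $d_{n-j}^{(n)}$ with $j\le n-1$ equals $-\tfrac{b_1^{(n)}}{b_0^{(n)}}\,d_{n-1-j}^{(n-1)}$, a consequence of the recursion \eqref{1.6}) to write $\sum_{j=1}^n d_{n-j}^{(n)}=\tfrac{1}{b_0^{(n)}}\bigl(1-b_1^{(n)}\tau_{n-1}\bigr)=\tau_n$. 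You instead invoke the matrix identity $D=B^{-1}$ from \eqref{1.10} and observe that $B\boldsymbol{\tau}=\mathbf{1}$ is equivalent to $D\mathbf{1}=\boldsymbol{\tau}$, which is the claimed row-sum identity; the induction is thereby absorbed into the already-established inversion relation, and no property of the DOC recursion beyond orthogonality is needed. Your route is shorter and arguably more conceptual --- the interpretation that $B\boldsymbol{\tau}=\mathbf{1}$ expresses exactness of the WSBDF2 operator $D_2$ on the linear function $v(t)=t$ is a genuine insight the paper does not state, and your argument would apply verbatim to any convolution kernels whose rows satisfy the same identity --- while the paper's induction has the minor advantage of being self-contained at the level of the kernel recursion, without passing through the matrix formalism. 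Both handle the second identity identically, by telescoping $\sum_{k=1}^n\tau_k=t_n$.
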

\begin{proof}
From the discrete convolution kernels in \eqref{1.5}, we have
\begin{equation*}
b_1^{(n)}\tau_{n-1}+b_0^{(n)}\tau_{n}\equiv1 \quad{\rm for}~~n\geq1.
\end{equation*}
Below we use the mathematical induction to prove the first equality
\begin{equation}\label{ad2.6}
\sum_{j=1}^kd_{k-j}^{(k)}\equiv\tau_k\quad{\rm for}~~k\geq1.
\end{equation}
For $k=1$,  we obtain
\begin{equation*}
\sum_{j=1}^1d_{1-j}^{(1)}=d_0^{(1)}=\frac{1}{b_0^{(1)}}\equiv\tau_1.
\end{equation*}
Suppose now \eqref{ad2.6} holds for all $2\leq k\leq n-1$, we need to prove that it holds for $k=n.$
\begin{equation*}
\sum_{j=1}^nd_{n-j}^{(n)}=\frac{1}{b_0^{(n)}}\left(1-b_1^{(n)}\sum_{j=1}^{n-1}d_{n-1-j}^{(n-1)}\right)=\frac{1}{b_0^{(n)}}\left(1-b_1^{(n)}\tau_{n-1}\right)\equiv\tau_n.
\end{equation*}
Summing \eqref{ad2.6} from $k=1$ to $n$, it is straightforward to obtain the second equality and complete the proof.
\end{proof}

\begin{lemma}\label{Le:2.3}
The DOC kernels $d_{n-k}^{(n)}$  in \eqref{1.6}  have an explicit formula
\begin{equation*}
d_{n-k}^{(n)}=\frac{1}{b_0^{(k)}}\prod_{i=k+1}^n\frac{(2\theta-1)r_i^2}{1+2\theta r_i} \quad{\rm for}~~1\leq k\leq n.
\end{equation*}
\end{lemma}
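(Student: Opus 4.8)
The plan is to establish the closed form by induction, exploiting the fact that the defining recursion \eqref{1.6} is already a two-term recurrence. Because the convolution kernels satisfy $b_j^{(n)}=0$ for $j\geq 2$ in \eqref{1.5}, the sum in \eqref{1.6} collapses and the DOC kernels obey
\begin{equation*}
d_{n-k}^{(n)}=-\frac{b_1^{(k+1)}}{b_0^{(k)}}\,d_{n-k-1}^{(n)},\qquad 1\leq k\leq n-1,
\end{equation*}
together with the initialization $d_0^{(n)}=1/b_0^{(n)}$. I would fix $n$ and induct downward on $k$ (equivalently, upward on the gap $n-k$). The base case is $k=n$: the claimed product over $i=n+1,\dots,n$ is empty, so the formula reads $d_0^{(n)}=1/b_0^{(n)}$, which is exactly the initialization.

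For the inductive step I would assume the formula for $d_{n-k-1}^{(n)}$, that is,
\begin{equation*}
d_{n-k-1}^{(n)}=\frac{1}{b_0^{(k+1)}}\prod_{i=k+2}^{n}\frac{(2\theta-1)r_i^2}{1+2\theta r_i},
\end{equation*}
substitute it into the recurrence, and compare with the target for $d_{n-k}^{(n)}$. The prefactor $1/b_0^{(k)}$ supplied by the recurrence matches the prefactor in the target, and the partial products over $i=k+2,\dots,n$ are common to both sides. Hence the whole identity reduces to the single-factor check
\begin{equation*}
-\frac{b_1^{(k+1)}}{b_0^{(k+1)}}=\frac{(2\theta-1)r_{k+1}^2}{1+2\theta r_{k+1}},
\end{equation*}
which follows immediately from \eqref{1.5}: the common denominator $\tau_{k+1}(1+r_{k+1})$ cancels in the ratio $b_1^{(k+1)}/b_0^{(k+1)}$, leaving $-(1-2\theta)r_{k+1}^2/(1+2\theta r_{k+1})$. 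This produces exactly the missing factor for $i=k+1$ and closes the induction.

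The only place where care is needed — and the one spot where a sign or index slip could occur — is the index bookkeeping: the recurrence divides by $b_0^{(k)}$, whereas the inductive hypothesis carries the factor $1/b_0^{(k+1)}$. The closed form is written precisely so that the $1/b_0^{(k)}$ prefactor is preserved and the newly created factor is $-b_1^{(k+1)}/b_0^{(k+1)}$ rather than $-b_1^{(k+1)}/b_0^{(k)}$; recognizing this alignment is what turns the verification into one line. As an independent cross-check I would unroll the recurrence into the product $d_{n-k}^{(n)}=\frac{1}{b_0^{(n)}}\prod_{i=k}^{n-1}\bigl(-b_1^{(i+1)}/b_0^{(i)}\bigr)$ and recover the same expression using the telescoping identity $\prod_{i=k+1}^{n}r_i=\tau_n/\tau_k$ (so that $\tau_k\prod_{i=k+1}^n r_i^2=\tau_n\prod_{i=k+1}^n r_i$) together with the telescoping of the $(1+r_i)$ factors; this guards against a dropped or duplicated term.
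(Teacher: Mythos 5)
Your proof is correct and follows essentially the same route as the paper: both arguments unroll the two-term recurrence $d_{n-k}^{(n)}=-\bigl(b_1^{(k+1)}/b_0^{(k)}\bigr)d_{n-k-1}^{(n)}$ into a product of factors $-b_1^{(i)}/b_0^{(i)}$ (the paper by direct telescoping, you by formal downward induction), handle the index alignment between $1/b_0^{(k)}$ and $1/b_0^{(k+1)}$ in the same way, and finish with the identical single-factor computation $-b_1^{(i)}/b_0^{(i)}=(2\theta-1)r_i^2/(1+2\theta r_i)$ from \eqref{1.5}.
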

\begin{proof}
From the DOC kernels in \eqref{1.6}, it yields
\begin{equation*}
-b_{1}^{(k+1)}d_{n-k-1}^{(n)}=-\frac{b_{1}^{(k+1)}}{b_{0}^{(k+1)}}b_{0}^{(k+1)}d_{n-k-1}^{(n)}=-\frac{b_{1}^{(k+1)}}{b_{0}^{(k+1)}}\left(-b_{1}^{(k+2)}d_{n-k-2}^{(n)}\right)
=\prod_{i=k+1}^n\left(-\frac{b_{1}^{(i)}}{b_{0}^{(i)}}\right).
\end{equation*}
From the WSBDF2 convolution kernels in \eqref{1.5}, we obtain
\begin{equation*} d_{n-k}^{(n)}=-\frac{b_{1}^{(k+1)}}{b_{0}^{(k)}}d_{n-k-1}^{(n)}=\frac{1}{b_0^{(k)}}\prod_{i=k+1}^n\left(-\frac{b_{1}^{(i)}}{b_{0}^{(i)}}\right)=\frac{1}{b_0^{(k)}}\prod_{i=k+1}^n\frac{(2\theta-1)r_i^2}{1+2\theta r_i}\quad{\rm for}~~1\leq k\leq n.
\end{equation*}
\end{proof}

\section{Stability for WSBDF2 method}\label{Se:stab}
We first consider the energy stability of the WSBDF2 method \eqref{1.4} by defining a
(modified) discrete energy $E^k$,
\begin{equation}\label{3.1}
E^k:=\frac{(2\theta-1)r^{3/2}_{k+1}}{1+r_{k+1}}\tau_k\|\partial_\tau u^k\|^2+\|\nabla u^k\|^2\quad{\rm for}~~k\geq1,
\end{equation}
together with the initial energy $E^0:=\|\nabla u^0\|^2$.
\begin{theorem}\label{theoremad3.1}
Under the adjacent step ratios $r_k$ satisfy $0<r_k\leq r_s$ in \eqref{ad2.a4}, the discrete solution $u^n$ of the WSBDF2 time-stepping scheme \eqref{1.4} satisfies
\begin{equation}\label{3.10}
\partial_\tau E^k\leq2\left(\theta f^k+\left(1-\theta\right) f^{k-1},\partial_\tau u^k\right)\quad{\rm for}~~k\geq1,
\end{equation}
which holds the energy dissipation law. So the discrete solution is unconditionally stable in the energy norm,
\end{theorem}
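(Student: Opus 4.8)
The plan is to test the time-discrete equation \eqref{1.3} in the $L^2$ inner product against the discrete velocity $2\nabla_\tau u^k = 2\tau_k\partial_\tau u^k$ and to match the two resulting terms with the increment $E^k-E^{k-1}$. Writing the WSBDF2 operator in its two-term form $D_2u^k=b_0^{(k)}\nabla_\tau u^k+b_1^{(k)}\nabla_\tau u^{k-1}$ from \eqref{1.2}--\eqref{1.5}, the tested scheme reads
\begin{equation*}
(D_2u^k,2\nabla_\tau u^k)-(\theta\Delta u^k+(1-\theta)\Delta u^{k-1},2\nabla_\tau u^k)=(\theta f^k+(1-\theta)f^{k-1},2\nabla_\tau u^k).
\end{equation*}
Since the right-hand side equals $2\tau_k(\theta f^k+(1-\theta)f^{k-1},\partial_\tau u^k)$, it suffices to bound the left-hand side below by $E^k-E^{k-1}$ and then divide by $\tau_k$.

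For the convolution term I would invoke the one-step estimate of Lemma \ref{Le:WSBDF}. Its proof uses only the elementary bound $2(a,b)\le\|a\|^2+\|b\|^2$ together with $b_1^{(k)}\le0$ (valid since $\theta\ge1/2$), so it transfers verbatim from real sequences to the $L^2$ inner product by taking $\omega_j=\nabla_\tau u^j$. Using $\|\nabla_\tau u^k\|^2/\tau_k=\tau_k\|\partial_\tau u^k\|^2$ this yields, for $k\ge2$,
\begin{equation*}
(D_2u^k,2\nabla_\tau u^k)\ge\frac{(2\theta-1)r_{k+1}^{3/2}}{1+r_{k+1}}\tau_k\|\partial_\tau u^k\|^2-\frac{(2\theta-1)r_{k}^{3/2}}{1+r_{k}}\tau_{k-1}\|\partial_\tau u^{k-1}\|^2,
\end{equation*}
which is exactly the difference of the kinetic parts of $E^k$ and $E^{k-1}$, with the correct shifted index $r_{k+1}$ appearing in $E^k$.

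For the diffusion term I would integrate by parts, $-(\Delta v,w)=(\nabla v,\nabla w)$ for $w\in H_0^1(\Omega)$, and apply the algebraic identity
\begin{equation*}
2(\theta a+(1-\theta)b,\,a-b)=\|a\|^2-\|b\|^2+(2\theta-1)\|a-b\|^2
\end{equation*}
with $a=\nabla u^k$, $b=\nabla u^{k-1}$; this follows from the splitting $\theta a+(1-\theta)b=\tfrac12(a+b)+(\theta-\tfrac12)(a-b)$. Hence $-(\theta\Delta u^k+(1-\theta)\Delta u^{k-1},2\nabla_\tau u^k)=\|\nabla u^k\|^2-\|\nabla u^{k-1}\|^2+(2\theta-1)\|\nabla u^k-\nabla u^{k-1}\|^2$, i.e.\ the difference of the potential parts of $E^k$ and $E^{k-1}$ plus a nonnegative remainder (here $2\theta-1\ge0$). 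Adding the two contributions, the left-hand side is bounded below by $(E^k-E^{k-1})+(2\theta-1)\|\nabla u^k-\nabla u^{k-1}\|^2$; discarding this last nonnegative term and dividing by $\tau_k$ produces \eqref{3.10}.

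The one genuinely separate point is the starting step $k=1$, where $D_2u^1=D_1u^1=b_0^{(1)}\nabla_\tau u^1$ and $E^0=\|\nabla u^0\|^2$ carries no kinetic part. There the convolution term equals $2\tau_1\|\partial_\tau u^1\|^2$, so to absorb it into $E^1$ I must verify $\tfrac{(2\theta-1)r_2^{3/2}}{1+r_2}\le2$, which is precisely the inequality $2+2r_2-(2\theta-1)r_2^{3/2}\ge0$ already established at the end of the proof of Lemma \ref{Le:WSBDF}. I expect the main (though modest) obstacle to be checking that the scalar one-step bound lifts cleanly to the Hilbert-space setting and that the telescoping kinetic terms line up with the shifted-index pattern of $E^k$, rather than any substantial new estimate.
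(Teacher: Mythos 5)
Your proposal is correct and takes essentially the same route as the paper's proof: test the scheme with $2\nabla_\tau u^k$, apply Lemma \ref{Le:WSBDF} (lifted from real sequences to the $L^2$ inner product) to the convolution term, reduce the diffusion term to $\|\nabla u^k\|^2-\|\nabla u^{k-1}\|^2$, and treat $k=1$ separately via $(2\theta-1)r_2^{3/2}/(1+r_2)\le 2$. The only differences are cosmetic: you use the exact identity $2(\theta a+(1-\theta)b,\,a-b)=\|a\|^2-\|b\|^2+(2\theta-1)\|a-b\|^2$ where the paper splits the same quantity into two inequalities and discards the same nonnegative remainder, and you make explicit the Hilbert-space lifting and the $k=1$ requirement that the paper leaves implicit.
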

\begin{equation*}
\sqrt{E^n}\leq\sqrt{E^0}+4C_\Omega\left(\|f^0\|+\sum_{k=1}^n\|\nabla_\tau f^k\|\right)\quad{\rm for}~~n\geq1.
\end{equation*}
\begin{proof}
Taking $\phi^k=2\nabla_\tau u^k$ in the weak form  \eqref{1.7} for $k\geq2$, we have
\begin{equation*}
2(D_2u^k,\nabla_\tau u^k)-2(\theta\Delta u^k+(1-\theta)\Delta u^{k-1},\nabla_\tau u^k)=2(\theta f^k+(1-\theta)f^{k-1},\nabla_\tau u^k).
\end{equation*}
According to Lemma \ref{Le:WSBDF}, it yields
\begin{equation*}
2(D_2u^k,\nabla_\tau u^k)\geq\frac{\left(2\theta-1\right)r^{3/2}_{k+1}}{1+r_{k+1}}\tau_k\|\partial_\tau u^k\|^2-\frac{\left(2\theta-1\right)r^{3/2}_{k}}{1+r_{k}}\tau_{k-1}\|\partial_\tau u^{k-1}\|^2.
\end{equation*}
With the help of the inequality $2a(a-b)\geq a^2-b^2$, we obtain
\begin{equation*}
\begin{split}
&-2\left(\theta\Delta u^k+\left(1-\theta\right)\Delta u^{k-1},\nabla_\tau u^k\right)=2\left(\theta\nabla u^k+\left(1-\theta\right)\nabla u^{k-1},\nabla u^k-\nabla u^{k-1}\right)\\
&=2\left(1-\theta\right)\left(\nabla u^k+\nabla u^{k-1},\nabla u^k-\nabla u^{k-1}\right)+2\left(2\theta-1\right)\left(\nabla u^k,\nabla u^k-\nabla u^{k-1}\right)\\
&\geq2\left(1-\theta\right)\left(\|\nabla u^k\|^2-\|\nabla u^{k-1}\|^2\right)+\left(2\theta-1\right)\left(\|\nabla u^k\|^2-\|\nabla u^{k-1}\|^2\right)\\
&=\|\nabla u^k\|^2-\|\nabla u^{k-1}\|^2.
\end{split}
\end{equation*}
It is easy to obtain that
\begin{equation}\label{3.2}
\nabla_{\tau}E^k\leq2\left(\theta f^k+\left(1-\theta\right) f^{k-1},\nabla_\tau u^k\right),\quad{\rm for}~~k\geq2.
\end{equation}
By taking $\phi^k=2\nabla_\tau u^1$ in the weak form \eqref{1.7} for the case $k=1$, we get
\begin{equation*}
2\tau_1\|\partial_\tau u^1\|^2+\|\nabla u^1\|^2-\|\nabla u^0\|^2\leq2\left(\theta f^1+\left(1-\theta\right) f^{0},\nabla_\tau u^1\right),
\end{equation*}
which implies
\begin{equation*}
\nabla_{\tau}E^1\leq2\left(\theta f^1+\left(1-\theta\right) f^{0},\nabla_\tau u^1\right).
\end{equation*}
Combining it with the general case \eqref{3.2}, one gets the discrete energy dissipation law \eqref{3.10}.
Summing the inequality \eqref{3.10} from $k=1$ to $n$, we have
\begin{equation}\label{3.4}
E^n\leq E^0+2 \sum_{k=1}^{n}\left(\theta f^k+\left(1-\theta\right) f^{k-1},\nabla_\tau u^k\right)\quad{\rm for}~~n\geq1.
\end{equation}
By applying the technique of time summation by parts \cite{T} and the Cauchy-Schwarz inequality, we obtain
\begin{equation*}
\begin{split}
&2\sum_{k=1}^{n}\left(\theta f^k+\left(1-\theta\right) f^{k-1},\nabla_\tau u^k\right)=2\theta \sum_{k=1}^{n}\left(f^k,\nabla_\tau u^k\right)+2\left(1-\theta\right) \sum_{k=1}^{n}\left(f^{k-1},\nabla_\tau u^k\right)\\
&\leq2\theta\left(\left(f^{n},u^n\right)-\sum_{k=2}^{n}\left(\nabla_\tau f^k,u^{k-1}\right)-\left(f^1,u^0\right)\right)\\
&\quad+2\left(1-\theta\right)\left(\left(f^{n-1},u^n\right)-\sum_{k=1}^{n-1}\left(\nabla_\tau f^k,u^k\right)-\left(f^0,u^0\right)\right)\\
&\leq2\theta C_\Omega\left(\sqrt{E^n}\|f^n\|+\sum_{k=2}^n \sqrt{E^{k-1}}\|\nabla_\tau f^k\|+\sqrt{E^0}\|f^1\|\right)\\
&\quad+2\left(1-\theta\right)C_\Omega\left(\sqrt{E^n}\|f^{n-1}\|+\sum_{k=1}^{n-1} \sqrt{E^{k}}\|\nabla_\tau f^k\|+\sqrt{E^0}\|f^0\|\right)\quad{\rm for}~~n\geq1,
\end{split}
\end{equation*}
where the Poincar{\'e} inequality has been used. It follows from \eqref{3.4} that
\begin{equation*}
\begin{split}
E^n&\leq E^0+2\theta C_\Omega\left(\sqrt{E^n}\|f^n\|+\sum_{k=2}^n \sqrt{E^{k-1}}\|\nabla_\tau f^k\|+\sqrt{E^0}\|f^1\|\right)\\
&\quad+2\left(1-\theta\right)C_\Omega\left(\sqrt{E^n}\|f^{n-1}\|+\sum_{k=1}^{n-1} \sqrt{E^{k}}\|\nabla_\tau f^k\|+\sqrt{E^0}\|f^0\|\right)\quad{\rm for}~~n\geq1.
\end{split}
\end{equation*}
Taking some integer $n_0\left(0\leq n_0\leq n\right)$ such that $E^{n_0}=\max_{0\leq j\leq n}E^{j}$. Taking $n:=n_0$ in the above inequality and apply the triangle inequality to obtain
\begin{equation*}
\begin{split}
E^{n_0}&\leq \sqrt{E^0}\sqrt{E^{n_0}}+2\theta C_\Omega \sqrt{E^{n_0}}\left(\|f^{n_0}\|+\sum_{k=2}^{n_0} \|\nabla_\tau f^k\|+\|f^1\|\right)\\
&\quad+2\left(1-\theta\right)C_\Omega \sqrt{E^{n_0}} \left(\|f^{n_0-1}\|+\sum_{k=1}^{n_0-1}\|\nabla_\tau f^k\|+\|f^0\|\right)\\
&\leq \sqrt{E^0}\sqrt{E^{n_0}}+4\theta C_\Omega \sqrt{E^{n_0}}\left(\|f^{1}\|+\sum_{k=2}^{n_0} \|\nabla_\tau f^k\|\right)\\
&\quad+4\left(1-\theta\right)C_\Omega \sqrt{E^{n_0}} \left(\|f^{0}\|+\sum_{k=1}^{n_0-1}\|\nabla_\tau f^k\|\right)\quad{\rm for}~~n\geq1,
\end{split}
\end{equation*}
where $f^{n_0}=f^{1}+\sum_{k=2}^{n_0}\nabla_\tau f^k$ and $f^{n_0-1}=f^{0}+\sum_{k=1}^{n_0-1}\nabla_\tau f^k$ have been used.
Hence, it yields
\begin{equation*}
\begin{split}
\sqrt{E^{n}}&\leq \sqrt{E^0}+4\theta C_\Omega\left(\|f^{1}\|+\sum_{k=2}^{n_0} \|\nabla_\tau f^k\|\right)+4\left(1-\theta\right)C_\Omega\left(\|f^{0}\|+\sum_{k=1}^{n_0-1}\|\nabla_\tau f^k\|\right)\\
&\leq \sqrt{E^0}+4\theta C_\Omega\left(\|f^0\|+\sum_{k=1}^{n} \|\nabla_\tau f^k\|\right)+4\left(1-\theta\right)C_\Omega\left(\|f^{0}\|+\sum_{k=1}^{n}\|\nabla_\tau f^k\|\right)\\
&=\sqrt{E^0}+4C_\Omega\left(\|f^0\|+\sum_{k=1}^{n} \|\nabla_\tau f^k\|\right)\quad{\rm for}~~n\geq1.
\end{split}
\end{equation*}
\end{proof}
If the exterior force $f(t)$ is zero-valued, the discrete energy dissipation law \eqref{3.10} gives
\begin{equation*}
E^k\leq E^{k-1}\quad{\rm for}~~k\geq1,
\end{equation*}
so that the variable-step WSBDF2 method \eqref{1.3} preserves the energy dissipation law at the discrete
levels. This property would be important in simulating the gradient flow problems.

\begin{remark}
From the point of energy technique \cite{ACYZ:20}, the Nevanlinna-Odeh multiplier is choosing $\mu_1=1$, namely, $\nabla_{\tau}u^{k}:=u^k-\mu_1u^{k-1}$
in Theorem \ref{theoremad3.1}. Moreover, Lemma \ref{Le:WSBDF} is similar to G-stability  for two-step multistep methods on variable grids.
\end{remark}

Now we establish the $L^2$ norm stability of the WSBDF2 method \eqref{1.4}.
\begin{theorem}\label{Th:stab}
If the WSBDF2 kernels $b_{n-k}^{(n)}$ in \eqref{1.5} are positive semi-definite, the discrete solution $u^n$ of the WSBDF2 method \eqref{1.3} is unconditionally stable in the $L^2$ norm
\begin{equation*}
\|u^{n}\|\leq\|u^0\|+2\sum_{k=1}^{n}\sum_{j=1}^{k}d_{k-j}^{(k)}\|\theta f^j+(1-\theta)f^{j-1}\|
\leq\|u^0\|+2t_n\max_{1\leq j\leq n}\|f^j\|\quad{\rm for} ~~n \geq 1.
\end{equation*}
Thus, the WSBDF2 method \eqref{1.3} is monotonicity-preserving.
\end{theorem}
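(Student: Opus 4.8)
The plan is to pass from the convolution form \eqref{1.4} of the scheme to its discrete orthogonal convolution (DOC) form, and then to test with the ``$\theta$-averaged'' solution so that the dissipative term reassembles into a DOC quadratic form that is controlled by Lemma \ref{Le:posi}. First I would write \eqref{1.3} as
$$\sum_{j=1}^n b_{n-j}^{(n)}\nabla_\tau u^j-\Delta w^n=g^n,\qquad n\geq1,$$
where $w^n:=\theta u^n+(1-\theta)u^{n-1}$ and $g^n:=\theta f^n+(1-\theta)f^{n-1}$ (the BDF1 start is automatically included since $b_0^{(1)}=1/\tau_1$). Multiplying by $d_{m-n}^{(m)}$, summing over $1\leq n\leq m$, and invoking the orthogonality identity \eqref{1.8} (equivalently $D=B^{-1}$ from \eqref{1.10}) to collapse the double convolution, I obtain the DOC-transformed scheme
$$\nabla_\tau u^m-\Delta\sum_{n=1}^m d_{m-n}^{(m)}w^n=\sum_{n=1}^m d_{m-n}^{(m)}g^n,\qquad m\geq1.$$

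Next I would take the $L^2$ inner product with $2w^m$ --- this is the crucial choice --- and sum over $1\leq m\leq N$. For the discrete-time term a direct expansion gives the identity $2(\nabla_\tau u^m,w^m)=\|u^m\|^2-\|u^{m-1}\|^2+(2\theta-1)\|\nabla_\tau u^m\|^2$, so the first part telescopes and, using $\theta\geq1/2$, is bounded below by $\|u^N\|^2-\|u^0\|^2$. For the dissipative term, using $-(\Delta v,w)=(\nabla v,\nabla w)$ I obtain the double sum $\sum_{m=1}^N\sum_{n=1}^m d_{m-n}^{(m)}(\nabla w^n,\nabla w^m)$; applying Lemma \ref{Le:posi} componentwise to the $L^2$ sequence $\{\nabla w^m\}$ (expand in an orthonormal basis, so that each scalar component yields a nonnegative DOC form) shows this is nonnegative and may be discarded. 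This yields $\|u^N\|^2-\|u^0\|^2\leq 2\sum_{m=1}^N\sum_{n=1}^m d_{m-n}^{(m)}(g^n,w^m)$.

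Finally I would close the estimate with Cauchy--Schwarz, the nonnegativity $d_{m-n}^{(m)}\geq0$ (read off from the explicit formula in Lemma \ref{Le:2.3} together with $\theta\geq1/2$), and the elementary bound $\|w^m\|\leq\max_{0\leq j\leq N}\|u^j\|$. Choosing $n_0$ with $\|u^{n_0}\|=\max_{0\leq j\leq N}\|u^j\|$ and taking $N=n_0$ produces a quadratic inequality $\|u^{n_0}\|^2-2S\|u^{n_0}\|-\|u^0\|^2\leq0$ with $S=\sum_{m=1}^{n_0}\sum_{n=1}^m d_{m-n}^{(m)}\|g^n\|$; solving it and using $\sqrt{S^2+\|u^0\|^2}\leq S+\|u^0\|$ produces exactly the factor $2$ in the asserted bound. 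The second inequality then follows from Corollary \ref{Cor:DOC}, since $\sum_{k=1}^n\sum_{j=1}^k d_{k-j}^{(k)}=t_n$ and $\|g^j\|\leq\max_{j}\|f^j\|$, and the $f\equiv0$ case gives the claimed monotonicity (norm non-increase).

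The main obstacle is the dissipative term: testing with the natural candidate $2u^m$ leaves mismatched cross terms $(\nabla w^n,\nabla u^m)$ with $n\neq m$ whose sign cannot be controlled. The whole argument hinges on instead testing with the $\theta$-average $2w^m$, which is precisely what turns the dissipative contribution into the symmetric DOC quadratic form $\sum_m\sum_n d_{m-n}^{(m)}(\nabla w^n,\nabla w^m)$ covered by Lemma \ref{Le:posi}; note that $\theta\geq1/2$ enters a second time here through the sign of $(2\theta-1)\|\nabla_\tau u^m\|^2$.
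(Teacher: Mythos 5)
Your proposal is correct and follows essentially the same route as the paper's proof: multiplying by the DOC kernels and invoking the orthogonality identity \eqref{1.8}, testing with $2\bigl(\theta u^k+(1-\theta)u^{k-1}\bigr)$, discarding the dissipative term via Lemma \ref{Le:posi}, and closing with Cauchy--Schwarz, a maximum-index argument, and Corollary \ref{Cor:DOC}. Your variants are only cosmetic (an exact identity for the time term where the paper uses the inequality $2a(a-b)\geq a^2-b^2$, and solving a quadratic inequality where the paper bounds $\|u^0\|^2\leq\|u^0\|\,\|u^{n_1}\|$), and you are in fact slightly more careful than the paper in noting explicitly that $d_{n-k}^{(n)}\geq0$ and that Lemma \ref{Le:posi} must be applied componentwise to the $L^2$-valued sequence.
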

\begin{proof}
Multiplying both sides of the equation \eqref{1.3} by the DOC kernels $d_{k-j}^{(k)}$, and summing $j$ from $1$ to $k$, we derive
\begin{equation*}
\sum_{j=1}^kd_{k-j}^{(k)}D_2u^j-\sum_{j=1}^kd_{k-j}^{(k)}\left(\theta\Delta u^j+\left(1-\theta\right)\Delta u^{j-1}\right)=\sum_{j=1}^kd_{k-j}^{(k)}\left(\theta f^j+\left(1-\theta\right)f^{j-1}\right).
\end{equation*}
Applying the orthogonal identity \eqref{1.8}, it yields
 \begin{equation*}
\sum_{j=1}^kd_{k-j}^{(k)}D_2u^j=\nabla_\tau u^k\quad{\rm for}~~k \geq 1.
\end{equation*}
Hence, we have
\begin{equation}\label{3.5}
\nabla_\tau u^k-\sum_{j=1}^kd_{k-j}^{(k)}\left(\theta\Delta u^j+\left(1-\theta\right)\Delta u^{j-1}\right)=\sum_{j=1}^kd_{k-j}^{(k)}\left(\theta f^j+\left(1-\theta\right)f^{j-1}\right).
\end{equation}
Making the inner product of the equation \eqref{3.5} with $\psi^k=2\left(\theta u^k+\left(1-\theta\right) u^{k-1}\right)$, and summing the resulting equality from $k=1$ to $n$, there exists
\begin{equation*}
\begin{split}
&\sum_{k=1}^n\left(\nabla_\tau u^k,\psi^k\right)-\sum_{k=1}^{n}\sum_{j=1}^kd_{k-j}^{(k)}\left(\theta\Delta u^j+\left(1-\theta\right)\Delta u^{j-1},\psi^k\right)\\
&=\sum_{k=1}^{n}\sum_{j=1}^kd_{k-j}^{(k)}\left(\theta f^j+\left(1-\theta\right)f^{j-1},\psi^k\right)\quad{\rm for}~~n \geq 1.
\end{split}
\end{equation*}
For the first term on the left hand, we have
\begin{equation}\label{3.6}
\begin{split}
&\sum_{k=1}^n\left(\nabla_\tau u^k,\psi^k\right)=2\sum_{k=1}^n\left(u^k-u^{k-1},\theta u^k+\left(1-\theta\right) u^{k-1}\right)\\
&=\sum_{k=1}^n\left[2\left(1-\theta\right)\left(u^k-u^{k-1}, u^k+ u^{k-1}\right)+2\left(2\theta-1\right)\left(u^k-u^{k-1}, u^k\right)\right]\\
&\geq\sum_{k=1}^n\left[2\left(1-\theta\right)\left(\|u^k\|^2-\|u^{k-1}\|^2\right)
+\left(2\theta-1\right)\left(\|u^k\|^2-\|u^{k-1}\|^2\right)\right]\\
&=\sum_{k=1}^n\left(\|u^k\|^2-\|u^{k-1}\|^2\right)=\|u^n\|^2-\|u^0\|^2,
\end{split}
\end{equation}
where the inequality $2a(a-b)\geq a^2-b^2$ has been used.

For the second term on the left hand, we obtain
\begin{equation}\label{3.7}
\begin{split}
&-\sum_{k=1}^{n}\sum_{j=1}^kd_{k-j}^{(k)}\left(\theta\Delta u^j+\left(1-\theta\right)\Delta u^{j-1},\psi^k\right)\\
&=-2\sum_{k=1}^{n}\sum_{j=1}^kd_{k-j}^{(k)}\left(\theta\Delta u^j+\left(1-\theta\right)\Delta u^{j-1},\theta u^k+\left(1-\theta\right) u^{k-1}\right)\\
&=2\sum_{k=1}^{n}\sum_{j=1}^kd_{k-j}^{(k)}\left(\theta\nabla u^j+\left(1-\theta\right)\nabla u^{j-1},\theta \nabla u^k+\left(1-\theta\right)\nabla u^{k-1}\right)\geq0,
\end{split}
\end{equation}
where the Lemma \ref{Le:posi} has been used.

Combing \eqref{3.6}, \eqref{3.7} and the Cauchy-Schwarz inequality, it yields
\begin{equation*}
\|u^n\|^2\leq\|u^0\|^2+2\sum_{k=1}^{n}\|\theta u^k+\left(1-\theta\right) u^{k-1}\|\sum_{j=1}^{k}d_{k-j}^{(k)}\|\theta f^j+\left(1-\theta\right)f^{j-1}\|.
\end{equation*}
Taking some integer $n_1\left(0\leq n_1\leq n\right)$ such that $\|u^{n_1}\|=\max_{0\leq k\leq n}\|u^k\|$. Taking $n:=n_1$ in the above inequality, we get
\begin{equation}
\|u^{n_1}\|^2\leq\|u^0\|\|u^{n_1}\|+2\|u^{n_1}\|\sum_{k=1}^{n_1}\sum_{j=1}^{k}d_{k-j}^{(k)}\|\theta f^j+\left(1-\theta\right)f^{j-1}\|\quad{\rm for} ~~n \geq 1.
\end{equation}
Hence, it yields
\begin{equation*}
\begin{split}
\|u^{n}\|\leq\|u^0\|+2\sum_{k=1}^{n}\sum_{j=1}^{k}d_{k-j}^{(k)}\|\theta f^j+(1-\theta)f^{j-1}\|
\leq\|u^0\|+2t_n\max_{1\leq j\leq n}\|f^j\|\quad{\rm for} ~~n \geq 1.
\end{split}
\end{equation*}
where the Corollary \ref{Cor:DOC} has been used.
\end{proof}

\section{$L^2$ norm convergence for WSBDF2 method}\label{Se:conv}

\begin{lemma}\label{Le:cons}
For the consistency error $\eta^j:=D_2u(t_j)-\theta u'(t_j)-\left(1-\theta\right) u'(t_{j-1})$, it holds that
\begin{equation*}
\sum_{k=1}^{n}\sum_{j=1}^kd_{k-j}^{(k)}\|\eta^j\|\leq2\left(2-\theta\right)\tau_{\max}^2\max_{0<t\leq t_1}\|\partial_{tt}u\|+3t_n\tau_{\max}^2\max_{0<t\leq T}\|\partial_{ttt}u\|\quad \forall n \geq 1
\end{equation*}
with $\tau_{\max}=\max_{1\leq k \leq N}{\tau_k}$.
\end{lemma}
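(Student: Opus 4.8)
The plan is to split the double sum according to whether the inner index is $j=1$ (the BDF1 starting step, since $D_2u^1:=D_1u^1$) or $j\ge 2$ (a genuine WSBDF2 step), and to bound the two pieces by entirely different mechanisms. Writing
\[
\sum_{k=1}^{n}\sum_{j=1}^{k}d_{k-j}^{(k)}\|\eta^j\|
=\Big(\sum_{k=1}^{n}d_{k-1}^{(k)}\Big)\|\eta^1\|
+\sum_{k=1}^{n}\sum_{j=2}^{k}d_{k-j}^{(k)}\|\eta^j\|,
\]
I would handle the second sum by a uniform second-order consistency estimate combined with the DOC summation identity, and the first by showing that the DOC weights attached to $\eta^1$ accumulate to only $O(\tau_{\max})$, which upgrades the merely first-order $\eta^1$ to a quadratic bound.

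For the generic steps $j\ge 2$ I would use that $\eta^j=\theta[(\Pi_{j,2}u)'(t_j)-u'(t_j)]+(1-\theta)[(\Pi_{j,2}u)'(t_{j-1})-u'(t_{j-1})]$, so $\eta^j$ is a fixed linear combination of the two derivative errors of the quadratic interpolant at the nodes $t_j$ and $t_{j-1}$. Each such error functional annihilates quadratics, so by the Peano kernel representation it equals $\int_{t_{j-2}}^{t_j}K(s)\,\partial_{ttt}u(s)\,ds$ with $\int|K|\le C\tau_{\max}^2$; evaluating the kernel at the two nodes gives $\|\eta^j\|\le 3\tau_{\max}^2\max_{0<t\le T}\|\partial_{ttt}u\|$ (the sharp constant is in fact $\le\tfrac13$, but $3$ is safe). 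Since the DOC kernels are positive (Lemma~\ref{Le:posi}, or directly from the formula in Lemma~\ref{Le:2.3}), I pull out the maximum and invoke Corollary~\ref{Cor:DOC}:
\[
\sum_{k=1}^{n}\sum_{j=2}^{k}d_{k-j}^{(k)}\|\eta^j\|
\le \Big(\max_{2\le j\le n}\|\eta^j\|\Big)\sum_{k=1}^{n}\sum_{j=1}^{k}d_{k-j}^{(k)}
= t_n\max_{2\le j\le n}\|\eta^j\|,
\]
which yields the term $3t_n\tau_{\max}^2\max_{0<t\le T}\|\partial_{ttt}u\|$.

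For the starting step I would first estimate $\eta^1$. Writing $D_2u(t_1)=D_1u(t_1)=\tau_1^{-1}\int_{t_0}^{t_1}u'(s)\,ds$ and integrating $u'(t_1),u'(t_0)$ against $\partial_{tt}u$ gives $\eta^1=\int_{t_0}^{t_1}\big(\tfrac{t_1-s}{\tau_1}-\theta\big)\partial_{tt}u(s)\,ds$, whence $\|\eta^1\|\le(2-\theta)\tau_1\max_{0<t\le t_1}\|\partial_{tt}u\|$ (the sharp constant is $\theta^2-\theta+\tfrac12\le 2-\theta$). The crux is that $\eta^1$ is only first order, so the required quadratic bound forces $\sum_{k=1}^n d_{k-1}^{(k)}=O(\tau_{\max})$. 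For this I would use the explicit formula of Lemma~\ref{Le:2.3}: setting $a_k:=d_{k-1}^{(k)}$ one has $a_1=\tau_1$ and $a_k=\frac{(2\theta-1)r_k^2}{1+2\theta r_k}a_{k-1}$, so the normalized weight $g_k:=a_k/\tau_k$ satisfies $g_1=1$ and $g_k=\frac{(2\theta-1)r_k}{1+2\theta r_k}g_{k-1}$. Since $\frac{(2\theta-1)r}{1+2\theta r}$ is increasing in $r$ with supremum $\frac{2\theta-1}{2\theta}<1$, I obtain $g_k\le\left(\frac{2\theta-1}{2\theta}\right)^{k-1}$, and hence $\sum_{k=1}^n d_{k-1}^{(k)}=\sum_k g_k\tau_k\le\tau_{\max}\sum_k\left(\frac{2\theta-1}{2\theta}\right)^{k-1}\le 2\theta\tau_{\max}\le 2\tau_{\max}$. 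Multiplying the two estimates bounds the starting-step contribution by $2(2-\theta)\tau_{\max}^2\max_{0<t\le t_1}\|\partial_{tt}u\|$, and adding the two pieces gives the claim.

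The main obstacle is precisely this starting-step term. Because $\eta^1$ loses one order (BDF1 instead of WSBDF2), the naive bound $d_{k-1}^{(k)}\le\tau_k$ only produces an $O(t_n\tau_{\max})$ estimate, which is not good enough. The delicate and essential point — the reason the explicit DOC formula of Lemma~\ref{Le:2.3} is needed — is that although the individual weights $d_{k-1}^{(k)}$ may grow with $k$ (the factor $\frac{(2\theta-1)r^2}{1+2\theta r}$ exceeds $1$ near $r=r_s$), the \emph{normalized} weights $d_{k-1}^{(k)}/\tau_k$ decay geometrically, so their sum collapses to a single $\tau_{\max}$.
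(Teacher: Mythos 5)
Your proposal is correct, and its architecture matches the paper's at the decisive point: both isolate the first-order starting error $\eta^1$, both use the explicit formula of Lemma \ref{Le:2.3} to show that the normalized weights $d_{k-1}^{(k)}/\tau_k$ decay geometrically with ratio at most $\tfrac12$ (the paper bounds the step factor by $\frac{r}{1+2r}\le\frac12$ after discarding $2\theta-1$; you bound it by $\frac{(2\theta-1)r}{1+2\theta r}\le\frac{2\theta-1}{2\theta}\le\frac12$ — equivalent), and both conclude $\sum_{k=1}^n d_{k-1}^{(k)}\le 2\tau_{\max}$, which is exactly how the paper upgrades the first-order $\eta^1$ to a quadratic contribution. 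Where you genuinely diverge is the generic steps $j\ge2$: the paper performs an explicit Taylor expansion yielding mesh-coupled remainders $\|\eta^j\|\le b_0^{(j)}\tau_j^2G_{t3}^j+\frac{(2\theta-1)r_j^2\tau_{j-1}^2}{2(1+2\theta r_j)}b_0^{(j)}G_{t3}^{j-1}$ with $G_{t3}^j=\int_{t_{j-1}}^{t_j}\|\partial_{ttt}u\|\,\d t$, and then needs the index-shifting identity $d_{k-j}^{(k)}b_0^{(j)}=\frac{(2\theta-1)r_{j+1}^2}{1+2\theta r_{j+1}}d_{k-j-1}^{(k)}b_0^{(j+1)}$ to reassemble the double sum before invoking Corollary \ref{Cor:DOC}; you instead prove one uniform Peano-kernel/divided-difference bound $\|\eta^j\|\le\frac{1+\theta}{6}\tau_{\max}^2\max_{0<t\le T}\|\partial_{ttt}u\|\le 3\tau_{\max}^2\max_{0<t\le T}\|\partial_{ttt}u\|$ and pull the maximum out of the sum. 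Your route is shorter and avoids the kernel recursion entirely; the paper's route carries local integrals longer (potentially sharper on graded meshes) but discards that precision in the final statement, so nothing is lost at the level of this lemma. One correction: the entrywise nonnegativity of the DOC kernels, which your ``pull out the max'' step requires, does \emph{not} follow from Lemma \ref{Le:posi} — positive semi-definiteness of the quadratic form says nothing about the signs of individual entries. Your alternative citation is the right one: the product formula of Lemma \ref{Le:2.3}, whose factors $\frac{(2\theta-1)r_i^2}{1+2\theta r_i}$ are nonnegative precisely because $\theta\ge\tfrac12$; keep that justification and drop the reference to Lemma \ref{Le:posi}.
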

\begin{proof}
For simplicity, denote
\begin{equation*}
G_{t2}^j=\int_{t_{j-1}}^{t_j}\|\partial_{tt}u\|dt \quad{\rm  and}\quad
G_{t3}^j=\int_{t_{j-1}}^{t_j}\|\partial_{ttt}u\|dt \quad{\rm  for}~~~ j\geq 1.
\end{equation*}
For the case of $j=1$, by using the Taylor's expansion formula, we obtain
\begin{equation*}
\begin{split}
\eta^1&=\frac{u(t_1)-u(t_0)}{\tau_1}-\theta u'(t_1)-\left(1-\theta\right) u'(t_{0})\\
&=\left(1-\theta\right)\int_0^{t_1}\partial_{tt}udt-\frac{1}{\tau_1}\int_0^{t_1}t\partial_{tt}udt.
\end{split}
\end{equation*}
Then the consistency error is bounded by
\begin{equation*}
\begin{split}
\|\eta^1\|\leq\left(1-\theta\right)G_{t2}^1+G_{t2}^1=\left(2-\theta\right)G_{t2}^1.
\end{split}
\end{equation*}
%Then Lemma \ref{Le:2.3} together with the discrete kernels $\hat{d}_{k-j}^{(k)}$ in \eqref{2.8} yields
%\begin{equation}\label{4.1}
%\begin{split}
%\|\eta^1\|\sum_{k=1}^{n}d_{k-1}^{(k)}\leq\left(2-\theta\right)b_0^{(1)}\tau_1G_{t2}^1\sum_{k=1}^{n}d_{k-1}^{(k)}=\left(2-\theta\right)\tau_1G_{t2}^1\sum_{k=1}^{n}\hat{d}_{k-1}^{(k)}.
%\end{split}
%\end{equation}
For the case of $j\geq2$, by using the Taylor's expansion formula, we derive
\begin{equation*}
\begin{split}
\eta^j&=b_0^{(j)}\left(u(t_j)-u(t_{j-1})\right)+b_1^{(j)}\left(u(t_{j-1})-u(t_{j-2})\right)-\theta u'(t_j)-\left(1-\theta\right) u'(t_{j-1})\\
&=\frac{\left(1+2\theta r_j\right)-\left(1-2\theta\right)r_j^2}{2\tau_j\left(1+r_j\right)}\int_{t_{j-1}}^{t_j}\left(t-t_{j-1}\right)^2\partial_{ttt}udt\\
&\quad+\frac{\left(1-2\theta\right)r_j^2}{2\tau_j\left(1+r_j\right)}\int_{t_{j-2}}^{t_j}\left(t-t_{j-2}\right)^2\partial_{ttt}udt
+\left(1-\theta\right)\int_{t_{j-1}}^{t_j}\left(t_{j-1}-t\right)\partial_{ttt}udt\\
&=\frac{1}{2}\left(b_0^{(j)}-b_1^{(j)}\right)\int_{t_{j-1}}^{t_j}\left(t-t_{j-1}\right)^2\partial_{ttt}udt
+\frac{1}{2}b_1^{(j)}\int_{t_{j-2}}^{t_{j-1}}\left(t-t_{j-2}\right)^2\partial_{ttt}udt\\
&\quad+\frac{1}{2}b_1^{(j)}\int_{t_{j-1}}^{t_{j}}\left(t-t_{j-1}+\tau_{j-1}\right)^2\partial_{ttt}udt
+\left(1-\theta\right)\int_{t_{j-1}}^{t_j}\left(t_{j-1}-t\right)\partial_{ttt}udt\\
&=\frac{1}{2}b_0^{(j)}\int_{t_{j-1}}^{t_j}\left(t-t_{j-1}\right)^2\partial_{ttt}udt
+\frac{1}{2}b_1^{(j)}\int_{t_{j-2}}^{t_{j-1}}\left(t-t_{j-2}\right)^2\partial_{ttt}udt\\
&\quad+\frac{1}{2}b_1^{(j)}\tau_{j-1}\int_{t_{j-1}}^{t_{j}}\left(2\left(t-t_{j-1}\right)+\tau_{j-1}\right)\partial_{ttt}udt
+\left(1-\theta\right)\int_{t_{j-1}}^{t_j}\left(t_{j-1}-t\right)\partial_{ttt}udt.
\end{split}
\end{equation*}
Then the consistency error is bounded by
\begin{equation*}
\begin{split}
\|\eta^j\|&\leq\frac{1}{2}b_0^{(j)}\tau_j^2G_{t3}^j-\frac{1}{2}b_1^{(j)}\tau_{j-1}^2G_{t3}^{j-1}
-\frac{1}{2}b_1^{(j)}\tau_{j-1}\left(2\tau_j+\tau_{j-1}\right)G_{t3}^j
+\left(1-\theta\right)\tau_jG_{t3}^j\\
&=\frac{1}{2}b_0^{(j)}\tau_j^2G_{t3}^j\left(1-\frac{b_1^{(j)}\left(1+2r_j\right)}{b_0^{(j)}r_j^2}+\frac{2\left(1-\theta\right)}{b_0^{(j)}\tau_j}\right)
-\frac{b_1^{(j)}\tau_{j-1}^2}{2b_0^{(j)}}b_0^{(j)}G_{t3}^{j-1}\\
&=b_0^{(j)}\tau_j^2G_{t3}^j+\frac{\left(2\theta-1\right)r_j^2\tau_{j-1}^2}{2\left(1+2\theta r_j\right)}b_0^{(j)}G_{t3}^{j-1}\quad {\rm for} ~~~~ j \geq 2.
\end{split}
\end{equation*}
Recalling the definitions of WSBDF2 kernels \eqref{1.5} and DOC kernels \eqref{1.6}, it yields
\begin{equation*}
d_{k-j}^{(k)}b_0^{(j)}=-d_{k-j-1}^{(k)}b_1^{(j+1)}=\frac{\left(2\theta-1\right)r_{j+1}^2}{1+2\theta r_{j+1}}d_{k-j-1}^{(k)}b_0^{(j+1)}\quad {\rm for} ~~~~ 1\leq j \leq k-1.
\end{equation*}
Thus, we apply the triangle inequality, we get
\begin{equation*}
\begin{split}
&\sum_{j=1}^kd_{k-j}^{(k)}\|\eta^j\|=d_{k-1}^{(k)}\|\eta^1\|+\sum_{j=2}^{k}d_{k-j}^{(k)}\|\eta^j\|\\
&\leq d_{k-1}^{(k)}\left(2-\theta\right)G_{t2}^1+\sum_{j=2}^{k}d_{k-j}^{(k)}b_0^{(j)}\tau_j^2G_{t3}^j+\frac{1}{2}\sum_{j=1}^{k-1}d_{k-j-1}^{(k)}b_0^{(j+1)}\frac{\left(2\theta-1\right)r_{j+1}^2}{1+2\theta r_{j+1}}\tau_{j}^2G_{t3}^{j}\\
&= d_{k-1}^{(k)}\left(2-\theta\right)G_{t2}^1+\sum_{j=2}^{k}d_{k-j}^{(k)}b_0^{(j)}\tau_j^2G_{t3}^j+\frac{1}{2}\sum_{j=1}^{k-1}d_{k-j}^{(k)}b_0^{(j)}\tau_{j}^2G_{t3}^{j}\\
&\leq d_{k-1}^{(k)}\left(2-\theta\right)G_{t2}^1+\frac{3}{2}\sum_{j=1}^{k}d_{k-j}^{(k)}b_0^{(j)}\tau_{j}^2G_{t3}^{j}\quad {\rm for} ~~~~ k \geq 1.
\end{split}
\end{equation*}
From Lemma \ref{Le:2.3}, we get
\begin{equation*}
d_{k-1}^{(k)}=\frac{1}{b_0^{(1)}}\prod_{i=2}^k\frac{(2\theta-1)r_i^2}{1+2\theta r_i}\leq\tau_1\prod_{i=2}^k\frac{r_i^2}{1+2 r_i}=\tau_k\prod_{i=2}^k\frac{r_i}{1+2 r_i}\leq\frac{\tau_k}{2^{k-1}}\quad {\rm for} ~~~~ 1\leq k \leq n.
\end{equation*}
Thus, we have
\begin{equation*}
\sum_{k=1}^nd_{k-1}^{(k)}\leq\sum_{k=1}^n\frac{\tau_k}{2^{k-1}}\leq\tau_{\max}\sum_{k=1}^n\frac{1}{2^{k-1}}\leq2\tau_{\max}~~{\rm with}~~\tau_{\max}=\max_{1\leq k \leq N}{\tau_k}.
\end{equation*}
According to the triangle inequality and Corollary \ref{Cor:DOC}, we derive
\begin{equation*}
\begin{split}
\sum_{k=1}^{n}\sum_{j=1}^kd_{k-j}^{(k)}\|\eta^j\|&\leq\sum_{k=1}^{n}d_{k-1}^{(k)}\left(2-\theta\right)G_{t2}^1+3\sum_{k=1}^{n}\sum_{j=1}^{k}d_{k-j}^{(k)}\tau_{j}G_{t3}^{j}\\
&\leq2\left(2-\theta\right)\tau_{\max}^2\max_{0<t\leq t_1}\|\partial_{tt}u\|+3t_n\tau_{\max}^2\max_{0<t\leq T}\|\partial_{ttt}u\|.
\end{split}
\end{equation*}

\end{proof}
%To process the error analysis, if all step ratios $r_k$ fulfill $0<r_k<\frac{\theta+\sqrt{\theta^2+2\theta-1}}{2\theta-1}$, we have
%\begin{equation*}
%\frac{\left(2\theta-1\right)r_k^2}{1+2\theta r_k}<1\quad {\rm for} ~~k\geq2.
%\end{equation*}
%Hence, we apply \eqref{2.8} to bound the terms $\sum_{k=j}^{n}\hat{d}_{k-j}^{(k)}$ in
%Lemma \ref{Le:cons}, it yields
%\begin{equation*}
%\sum_{k=j}^{n}\hat{d}_{k-j}^{(k)}=\sum_{k=j}^{n}\prod_{i=j+1}^k\frac{\left(2\theta-1\right)r_i^2}{1+2\theta r_i}\leq\sum_{k=j}^{n}\left(\frac{\left(2\theta-1\right)r_c^2}{1+2\theta r_c}\right)^{k-j}
%\leq\frac{1+2\theta r_c}{1+2\theta r_c-\left(2\theta-1\right)r_c^2},
%\end{equation*}
%where $r_c$ takes the maximum value of all step ratios $r_k$.
%\begin{lemma}
%Consider the discrete kernels $\hat{d}_{k-j}^{(k)}$ in \eqref{2.8}. If the adjacent step ratios $r_k$ satisfy $0<r_k\leq r_s$ in \eqref{2.33}, it holds that
%\begin{equation}\label{4.10}
%\sum_{k=j}^n\hat{d}_{k-j}^{(k)}\leq C_r:=\left(\frac{\left(2\theta-1\right)\hat{r}_c^2}{1+2\theta\hat{r}_c}\right)^{N_0}\frac{1+2\theta r_c}{1+2\theta r_c-\left(2\theta-1\right)r_c^2}\quad {\rm for} ~~1\leq j\leq n,
%\end{equation}
%where $r_c$ takes the maximum value of all step ratios $r_k\in\left(0,\frac{\theta+\sqrt{\theta^2+2\theta-1}}{2\theta-1}\right)$ and $\hat{r}_c$ takes the maximum value of those step ratios $r_k\in[\frac{\theta+\sqrt{\theta^2+2\theta-1}}{2\theta-1},r_s]$ for $2\leq k \leq N$.
%\end{lemma}

\begin{theorem}\label{Th:conv}
Let $u(t_n)$ and $u^n$ be the solutions of the parabolic equation \eqref{ivp} and the WSBDF2 method \eqref{1.4}, respectively. Then the time-discrete solution $u^n$ is convergent in the $L^2$ norm
\begin{equation*}
\|u(t_n)-u^n\|\leq C\left(\tau_{\max}^2\max_{0<t\leq t_1}\|\partial_{tt}u\|+t_n\tau_{\max}^2\max_{0<t\leq T}\|\partial_{ttt}u\|\right)~~{\rm for}~~n \geq 1.
\end{equation*}
\end{theorem}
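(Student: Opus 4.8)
The plan is to prove the convergence theorem by setting up the error equation for $e^n := u(t_n) - u^n$, exploiting the fact that both the exact solution and the numerical solution satisfy the same discrete scheme up to the consistency error $\eta^n$ of Lemma \ref{Le:cons}. Subtracting the WSBDF2 scheme \eqref{1.3} satisfied by $u^n$ from the corresponding relation satisfied by the exact nodal values $u(t_n)$, I obtain
\begin{equation*}
D_2 e^n - \theta\Delta e^n - (1-\theta)\Delta e^{n-1} = \eta^n \quad {\rm for}~~n\geq1,
\end{equation*}
with $e^0 = 0$ (since $u^0 = u_0$ is exact). The point is that this error equation has exactly the same structure as the stability equation \eqref{1.3}, but with the forcing term $\theta f^n + (1-\theta)f^{n-1}$ replaced by the single consistency term $\eta^n$.

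The key step is then to invoke the $L^2$ stability machinery of Theorem \ref{Th:stab}. I would multiply the error equation by the DOC kernels $d_{k-j}^{(k)}$, sum over $j$ from $1$ to $k$, and use the discrete orthogonal identity \eqref{1.8} to collapse the convolution, yielding $\sum_{j=1}^k d_{k-j}^{(k)} D_2 e^j = \nabla_\tau e^k$ just as in the proof of Theorem \ref{Th:stab}. Taking the inner product with $\psi^k = 2(\theta e^k + (1-\theta)e^{k-1})$, summing over $k$, and applying the positive semidefiniteness of the DOC kernels (Lemma \ref{Le:posi}) to discard the Laplacian term as nonnegative, I arrive at the stability estimate
\begin{equation*}
\|e^n\| \leq \|e^0\| + 2\sum_{k=1}^n \sum_{j=1}^k d_{k-j}^{(k)} \|\eta^j\| \quad {\rm for}~~n\geq1.
\end{equation*}
Since $e^0 = 0$, the right-hand side reduces to the double sum of DOC-weighted consistency errors.

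At this point the proof is essentially complete by direct substitution: I apply Lemma \ref{Le:cons}, which bounds precisely the quantity $\sum_{k=1}^n \sum_{j=1}^k d_{k-j}^{(k)} \|\eta^j\|$ by $2(2-\theta)\tau_{\max}^2 \max_{0<t\leq t_1}\|\partial_{tt}u\| + 3t_n \tau_{\max}^2 \max_{0<t\leq T}\|\partial_{ttt}u\|$. Absorbing the constants $2(2-\theta)$ and $3$ (together with the factor $2$ from the stability estimate) into a single constant $C$ gives exactly the claimed bound.

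I expect the main obstacle to be a bookkeeping issue rather than a conceptual one: I must verify that the exact solution $u(t_n)$ genuinely satisfies the discrete scheme with residual $\eta^n$ as defined in Lemma \ref{Le:cons}, i.e. that $D_2 u(t_n) - \theta\Delta u(t_n) - (1-\theta)\Delta u(t_{n-1})$ equals $\theta f^n + (1-\theta)f^{n-1} + \eta^n$. This follows because the continuous equation \eqref{ivp} gives $u'(t_j) = f(t_j) - Au(t_j)$ (so $-\Delta u = f - u'$ in the PDE setting), and the definition $\eta^j = D_2 u(t_j) - \theta u'(t_j) - (1-\theta)u'(t_{j-1})$ precisely captures the mismatch between the discrete time-derivative and the exact one; the spatial terms match identically. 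The only subtlety is the treatment of the first step $k=1$, where the scheme uses BDF1 and $\eta^1$ is bounded separately by $(2-\theta)G_{t2}^1$ in Lemma \ref{Le:cons} — but since the stability estimate and the consistency lemma already account for this first-step term uniformly, no special argument beyond citing the two lemmas is required.
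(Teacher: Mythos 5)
Your proposal is correct and follows essentially the same route as the paper's own proof: the identical error equation for $e^n$, collapsing the convolution via the DOC kernels and the orthogonality identity \eqref{1.8}, testing with $2(\theta e^k+(1-\theta)e^{k-1})$, discarding the Laplacian term by Lemma \ref{Le:posi}, and concluding with Lemma \ref{Le:cons}. The only cosmetic difference is that you cite the stability machinery of Theorem \ref{Th:stab} with forcing $\eta^j$ in place of $\theta f^j+(1-\theta)f^{j-1}$, whereas the paper rewrites that argument (including the maximum-index trick) explicitly for $e^n$; the content is the same.
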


\begin{proof}
Let $e^n=u(t_n)-u^n$. From \eqref{ivp} and \eqref{1.4}, we obtain
\begin{equation}\label{4.3}
D_2e^n-\theta\Delta e^n-\left(1-\theta\right)\Delta e^{n-1}=D_2u(t_n)-\theta u'(t_n)-\left(1-\theta\right) u'(t_{n-1}):=\eta^n~~~~{\rm for} ~~n \geq 1.
\end{equation}
Replacing $n$ by $j$ and multiplying both sides of the equation \eqref{4.3} by the DOC kernels $d_{k-j}^{(k)}$, and summing $j$ from $1$ to $k$, we obtain
\begin{equation*}
\sum_{j=1}^kd_{k-j}^{(k)}D_2e^j-\sum_{j=1}^kd_{k-j}^{(k)}\left(\theta\Delta e^j+\left(1-\theta\right)\Delta e^{j-1}\right)=\sum_{j=1}^kd_{k-j}^{(k)}\eta^j~~~~{\rm for} ~~~~ k \geq 1.
\end{equation*}
Applying the orthogonal identity \eqref{1.8}, it yields
\begin{equation*}
\sum_{j=1}^kd_{k-j}^{(k)}D_2e^j=\nabla_{\tau}e^k~~~~{\rm for} ~~~~ k \geq 1.
\end{equation*}
Hence, we have
\begin{equation}\label{4.4}
\nabla_{\tau}e^k-\sum_{j=1}^kd_{k-j}^{(k)}\left(\theta\Delta e^j+\left(1-\theta\right)\Delta e^{j-1}\right)=\sum_{j=1}^kd_{k-j}^{(k)}\eta^j   ~~~~{\rm for} ~~~~ k \geq 1.
\end{equation}
Making the inner product of the equation \eqref{4.4} with $\phi^k=2\left(\theta e^k+\left(1-\theta\right)e^{k-1}\right)$, and summing the resulting equality
from $k=1$ to $n$, there exists
\begin{equation*}
\sum_{k=1}^n\left(\nabla_{\tau}e^k,\phi^k\right)-\sum_{k=1}^n\sum_{j=1}^kd_{k-j}^{(k)}\left(\theta\Delta e^j+\left(1-\theta\right)\Delta e^{j-1},\phi^k\right)=\sum_{k=1}^n\sum_{j=1}^kd_{k-j}^{(k)}\left(\eta^j,\phi^k\right).
\end{equation*}
For the first term on the left hand, we have
\begin{equation}\label{4.5}
\begin{split}
\sum_{k=1}^n\left(\nabla_{\tau}e^k,\phi^k\right)&=2\sum_{k=1}^n\left(e^k-e^{k-1},\theta e^k+\left(1-\theta\right)e^{k-1}\right)\\
&=\sum_{k=1}^n\left[2\left(1-\theta\right)\left(e^k-e^{k-1},e^k+e^{k-1}\right)+2\left(2\theta-1\right)\left(e^k-e^{k-1},e^k\right)\right]\\
&\geq\sum_{k=1}^n\left[2\left(1-\theta\right)\left(\|e^k\|^2-\|e^{k-1}\|^2\right)+\left(2\theta-1\right)\left(\|e^k\|^2-\|e^{k-1}\|^2\right)\right]\\
&=\sum_{k=1}^n\left(\|e^k\|^2-\|e^{k-1}\|^2\right)=\|e^n\|^2-\|e^0\|^2,
\end{split}
\end{equation}
where the inequality $2a(a-b)\geq a^2-b^2$ has been used.

For the second term on the left hand, we obtain
\begin{equation}\label{4.6}
\begin{split}
&-\sum_{k=1}^n\sum_{j=1}^kd_{k-j}^{(k)}\left(\theta\Delta e^j+\left(1-\theta\right)\Delta e^{j-1},\phi^k\right)\\
&=-2\sum_{k=1}^n\sum_{j=1}^kd_{k-j}^{(k)}\left(\theta\Delta e^j+\left(1-\theta\right)\Delta e^{j-1},\theta e^k+\left(1-\theta\right)e^{k-1}\right)\\
&=2\sum_{k=1}^n\sum_{j=1}^kd_{k-j}^{(k)}\left(\theta \nabla e^j+\left(1-\theta\right)\nabla e^{j-1},\theta \nabla e^k+\left(1-\theta\right)\nabla e^{k-1}\right)\geq0,
\end{split}
\end{equation}
where the Lemma \ref{Le:posi} has been used.

Combing \eqref{4.5}, \eqref{4.6} and the Cauchy-Schwarz inequality, it yields
\begin{equation*}
\|e^n\|^2\leq2\sum_{k=1}^n\|\theta e^k+\left(1-\theta\right)e^{k-1}\|\sum_{j=1}^kd_{k-j}^{(k)}\|\eta^j\|~~~~{\rm for} ~~~~ n \geq 1.
\end{equation*}
Taking some integer $n_1\left(0\leq n_1\leq n\right)$  such that $\|e^{n_1}\|=\max_{0\leq k\leq n}\|e^k\|$. Taking $n:=n_1$ in the above inequality, we get
\begin{equation*}
\|e^{n_1}\|^2\leq 2\|e^{n_1}\|\sum_{k=1}^{n_1}\sum_{j=1}^kd_{k-j}^{(k)}\|\eta^j\|~~~~{\rm for} ~~~~ n \geq 1.
\end{equation*}
Hence, it yields
\begin{equation*}
\|e^{n}\|\leq\|e^{n_1}\|\leq 2\sum_{k=1}^{n_1}\sum_{j=1}^kd_{k-j}^{(k)}\|\eta^j\|\leq 2\sum_{k=1}^{n}\sum_{j=1}^kd_{k-j}^{(k)}\|\eta^j\|~~~~{\rm for} ~~~~ n \geq 1.
\end{equation*}
The desired results follows from Lemma \ref{Le:cons} immediately.
\end{proof}

\section{Numerical experiments}\label{Se:numer}
We apply the WSBDF2 method \eqref{1.3} to the initial and boundary value problems \eqref{ivp}
with  $\Omega=(-1,1)^2$ and $T=1$, subject to homogeneous Dirichlet boundary conditions.
In space, we discretize by the spectral collocation method  with the Cheby\-shev--Gauss--Lobatto points.
We numerically verified the  theoretical results including convergence orders in the discrete $L^2$-norm.
We express the space discrete approximation $u_I^{n}$ in terms of its values
at Chebyshev--Gauss--Lobatto  points,
\[u_I^{n}(x,y)=\sum^{M_x}_{i=0}\sum^{M_y}_{j=0}u_{ij}^{n}\ell_i(x)\ell_j(y),
\quad \ell_i(x)=\prod_{\substack{j=0\\ j\ne i}}^{M_x}\frac{x-x_j}{x_i-x_j},\]
where  $u_{ij}^{n}:=u_I^{n}(x_i,y_j)$ at the mesh points  $(x_i,y_j)$.
Here, $-1=x_0<x_1<\dotsb<x_{M_x}=1$ and $-1=y_0<y_1<\dotsb<y_{M_y}=1$
are nodes of Lobatto quadrature rules.
In order to test the temporal error, we fix $M_x = M_y = 20$;
the spatial error is negligible  since the spectral collocation method converges exponentially;
see, e.g., \cite[Theorem 4.4, \textsection{4.5.2}]{STW:2011}.

\begin{example}\label{ex:6.1}
The initial value and the forcing term are chosen such that the exact solution of equation \eqref{ivp} is
\begin{equation}
\label{periodic}
u(x,y,t)=(t^3+1)\sin(\pi x)\sin(\pi y),\quad -1\leqslant x,y \leqslant 1,\  0\leqslant t \leqslant 1.
\end{equation}
Here, we consider three cases of the adjacent time-step ratios $r_k$.

Case I: $r_{2k}=4,$ for $1\leq k\leq \frac{N}{2}$, and $r_{2k+1}=1/4,$ for $1\leq k\leq \frac{N}{2}-1$.

Case II: $r_{k}=2,$ for $1\leq k\leq N$.

Case III: the arbitrary meshes with random time-steps $\tau_k=T\epsilon_k/S$ for $1\leq k\leq N$, where
$S=\sum_{k=1}^N\epsilon_k$ and $\epsilon_k\in(0,1)$ are random numbers subject to the uniform distribution \cite{LZ}.

\end{example}

\begin{table}[!ht]
 \begin{center}
  \caption { The discrete $L^2$-norm errors and numerical convergence orders with $M_x=M_y=20$.}
\begin{tabular*}{\linewidth}{@{\extracolsep{\fill}}*{10}{c}}                   \hline
%%%%%%%%%%%%%%%%%%%%%%%%%%%%%%%%%%%%%%%%%%%%%%%%%%%%%%%%%%%%%%%%%%%%%%%%%%%%%%%%%%%%%%%%%%%%%%%%%%%%%%%%%%%%%%%%%%%%%%%%%%%%%%%%%%%%
&&&Case I\\
$N$& $\theta=1/2$&  Rate     & $\theta=3/4$  & Rate        & $\theta=1$ &   Rate    \\\hline
~20&    1.9072e-04  &           &  2.5972e-04     &          & 3.3563e-04    &  \\
~40&    4.4006e-05  &  2.1157  &  6.2088e-05     &  2.0646     & 8.0966e-05    &2.0515  \\
~80&    1.0542e-05   & 2.0615    &1.5139e-05     & 2.0360     & 1.9832e-05    & 2.0295  \\
~160&   2.5781e-06  & 2.0318     &  3.7351e-06    & 2.0191     & 4.9038e-06    & 2.0158  \\ \hline
%~320&   6.3736e-07  & 2.0162     & 9.2745e-07     & 2.0098     & 1.2190e-06    &  2.0082  \\ \hline
&&&Case II\\
$N$& $\theta=1/2$&  Rate       & $\theta=3/4$  & Rate       & $\theta=1$ &   Rate    \\\hline
~20&   1.1216e-02  &       & 1.7255e-02     &      &  2.1030e-02    &  \\
~40&   1.1216e-02  &  -     & 1.7255e-02     & -     &  2.1030e-02    & - \\
~80&   1.1216e-02  &  -     & 1.7255e-02     & -     &  2.1030e-02    & -  \\
~160&  1.1216e-02  &  -     & 1.7255e-02     & -     &  2.1030e-02    &  -  \\ \hline
%~320&  1.1216e-02  &  -     & 1.7255e-02     & -     &  2.1030e-02    &  -  \\ \hline
&&&Case III\\
$N$& $\theta=1/2$&  Rate       & $\theta=3/4$  & Rate       & $\theta=1$ &   Rate    \\\hline
~20&   1.4926e-04  &            &  2.6837e-04     &          &  3.9991e-04    &  \\
~40&   3.0490e-05  & 2.2914      &  5.5907e-05     &2.2631   &8.3039e-05    & 2.2678 \\
~80&   6.7401e-06  &  2.1775     & 1.5243e-05     &1.8749     &2.3421e-05    & 1.8260  \\
~160&  1.6679e-06  &  2.0147      & 3.0404e-06     &2.3258    &4.2701e-06    & 2.4555  \\ \hline
%~320&  6.4628e-07  &  1.3678     &1.3325e-06        &1.1901         &1.9576e-06    & 1.1252  \\ \hline
    \end{tabular*}\label{table:1}
  \end{center}
\end{table}

Table \ref{table:1} shows  the optimal convergence orders,
which agree with Theorem \ref{Th:conv}.

\section{Conclusions}
There are already some theoretical results for BDF2 method or Crank-Nicolson method to solve the PDEs with variable steps.
In this work,  we first construct  the WSBDF2 methods by weight and shifted technique,  which  establish a connection between BDF2 and Crank-Nicolson scheme.
Here, the optimal adjacent  time-step  ratios are obtained for the WSBDF2 methods,  which greatly amplify the maximum time-step ratios for BDF2 in recent works  \cite{LZ,LJWZ,TZZ:20}.
The main results  of this paper is to prove the unconditional stability and optimal convergence, which fill in the gap  of the    theory for PDEs in \cite{WR:08}.
Another interesting topic is how to design the high-order schemes (e.g., WSBDF3) on variable grids.

\section*{Acknowledgments}
The first author wishes to thank Prof.  Hong-lin Liao, Jiwei Zhang and Zhimin Zhang  for several suggestions and comments.
The work was partially supported by NSFC 11601206.
%We would like to thank the anonymous referees for their input to Remark \ref{remark3.1} and Remark \ref{remark4.1},
%and for several suggestions and comments  that led to much better results and an improved presentation.

\bibliographystyle{amsplain}

\end{document}